\title{\textbf{On the uniform convergence of double sine series}}
\author[1]{\textbf{K.~Duzinkiewicz}}
\author[2]{\textbf{B.~Szal}}
\affil[ ]{University of Zielona G\'ora}
\affil[ ]{Faculty of Mathematics, Computer Science and Econometrics}
\affil[ ]{65-516 Zielona G\'ora, ul. Szafrana 4a, Poland}
\affil[1]{\tt K.Duzinkiewicz@wmie.uz.zgora.pl}
\affil[2]{\tt B.Szal@wmie.uz.zgora.pl}
\date{}
\numberwithin{equation}{section}
\theoremstyle{plain}
\newtheorem{twr}{Theorem}
\newtheorem{lem}{Lemma}
\newtheorem{df}{Definition}
\newtheorem{rem}{Remark}
\newtheorem{cor}{Corollary}
\begin{document}
\maketitle
\begin{abstract}
\noindent The fundamental theorem in the theory of the uniform convergence of sine series is due to Chaundy and Jolliffe from 1916 (see \cite{CH&J}). 
Several authors gave conditions for this problem supposing that coefficients are monotone, non-negative or more recently, general monotone (see \cite{T}, \cite{S} and \cite{K}, for example). There are also results for the regular convergence of double sine series to by uniform in case the coefficients are monotone or general monotone double sequences. In this article we give new sufficient conditions for the uniformity of the regular convergence of double sine series, which are necessary as well in case the coefficients are non-negative. We shall generalize those results defining a new class of double sequences for the coefficients.\\
\newline
\noindent \textbf{Mathematics subject classification number:} 42A20, 42A32, 42B99.\\
\newline
\noindent \textbf{Key words:} double sine series, uniform convergence, generalized monotonicity.
\end{abstract}

\section{Known results: uniform convergence of single sine series}

Let $\{c_{k}\}_{k=1}^{\infty}$ be a non-negative real sequence and consider
the series

\begin{equation}
\sum_{k=1}^{\infty }{c_{k}}\sin kx.  \label{1.1}
\end{equation}

In 1916 Chaundy and Jolliffe \cite{CH&J} proved the following classical
result.

\begin{twr}
If $\{c_{k}\}_{k=1}^{\infty }\subset \mathbb{R}_{+}$ is decreasing to zero,
then (\ref{1.1}) converges uniformly in $x$ if and only if 
\begin{equation}
kc_{k}\rightarrow 0\qquad \text{as}\qquad k\rightarrow \infty .  \label{1.2}
\end{equation}%
\label{T.1}
\end{twr}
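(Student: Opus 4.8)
The plan is to establish both implications through the uniform Cauchy criterion for the partial sums $s_n(x)=\sum_{k=1}^{n}c_k\sin kx$. Since every term $c_k\sin kx$ is odd and $2\pi$-periodic, it suffices to control the blocks $\sigma_{n,m}(x):=\sum_{k=n+1}^{m}c_k\sin kx$ for $x\in(0,\pi]$; the value at $x=0$ is $0$, and periodicity/oddness transfer everything else.

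For the \emph{necessity} of (\ref{1.2}): assume uniform convergence, so for each $\varepsilon>0$ there is $N$ with $|\sigma_{n,m}(x)|<\varepsilon$ for all $m>n\ge N$ and all $x$. Take $m=2n$ and $x=\tfrac{\pi}{4n}$; then for $k\in\{n+1,\dots,2n\}$ one has $kx\in(\tfrac{\pi}{4},\tfrac{\pi}{2}]$, hence $\sin kx\ge\tfrac{1}{\sqrt2}$. Using the monotonicity $c_k\ge c_{2n}$ on this range of $k$ gives $\tfrac{1}{\sqrt2}\,n\,c_{2n}\le\sum_{k=n+1}^{2n}c_k\sin kx<\varepsilon$, so $2n\,c_{2n}\to0$; then $(2n+1)c_{2n+1}\le\tfrac{2n+1}{2n}\cdot 2n\,c_{2n}\to0$ as well, which yields $kc_k\to0$.

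For the \emph{sufficiency}: put $\gamma_N:=\sup_{k\ge N}kc_k$, so $\gamma_N\to0$ by (\ref{1.2}). Fix $m>n\ge N$ and $x\in(0,\pi]$. The two tools are the elementary bound $|\sin t|\le|t|$ and an Abel summation against the conjugate Dirichlet kernel $\widetilde D_k(x)=\sum_{j=1}^{k}\sin jx$, which satisfies $|\widetilde D_k(x)|\le 1/\sin(x/2)\le\pi/x$; combined with $\sum_{k=p+1}^{q-1}(c_k-c_{k+1})=c_{p+1}-c_q$ this gives, for any $q>p$, the estimate $\bigl|\sum_{k=p+1}^{q}c_k\sin kx\bigr|\le 2\pi c_{p+1}/x$. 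Now if $x\ge\tfrac{1}{n+1}$, the Abel estimate alone yields $|\sigma_{n,m}(x)|\le 2\pi c_{n+1}/x\le 2\pi\gamma_N$. If $x<\tfrac{1}{n+1}$, set $v:=\lfloor 1/x\rfloor\ge n+1$ and split the block at $v$: on $n<k\le\min(v,m)$ use $|\sin kx|\le kx$ to get a bound $x\sum k c_k\le\gamma_N\,xv\le\gamma_N$ (because $vx\le1$), and on $v<k\le m$ (when nonempty) use the Abel estimate with $p=v$ to get $2\pi c_{v+1}/x\le 2\pi\gamma_N$ (because $v+1>1/x$ forces $(v+1)x>1$). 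In every case $|\sigma_{n,m}(x)|\le(1+2\pi)\gamma_N$, a bound independent of $x,m,n$ that tends to $0$ as $N\to\infty$; hence the uniform Cauchy criterion holds and (\ref{1.1}) converges uniformly.

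The genuinely routine pieces are the bound on $\widetilde D_k$ and the summation-by-parts bookkeeping. The only spot needing a little attention is the case split at $v=\lfloor 1/x\rfloor$ together with the inequalities $vx\le 1<(v+1)x$ that make both halves of the block small; I do not expect any real obstacle here, as this is essentially the classical Chaundy--Jolliffe argument.
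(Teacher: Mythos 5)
Your proof is correct. Note that the paper itself offers no proof of Theorem \ref{T.1}: it is quoted as the classical Chaundy--Jolliffe result from \cite{CH&J}, so there is nothing internal to compare against line by line. What you wrote is the standard argument, and both halves check out: in the necessity part the choice $m=2n$, $x=\pi/(4n)$ gives $kx\in(\pi/4,\pi/2]$ and hence $\sin kx\ge 1/\sqrt2$, so monotonicity yields $n c_{2n}\le\sqrt2\,\epsilon$, and the passage to odd indices via $c_{2n+1}\le c_{2n}$ is fine; in the sufficiency part the Abel-summation bound $\bigl|\sum_{k=p+1}^{q}c_k\sin kx\bigr|\le 2\pi c_{p+1}/x$ (using $|\widetilde D_k(x)|\le 1/\sin(x/2)\le\pi/x$ on $(0,\pi]$ and the telescoping of $c_k-c_{k+1}\ge0$) together with the split at $v=\lfloor 1/x\rfloor$ and $vx\le1<(v+1)x$ gives the uniform bound $(1+2\pi)\gamma_N$. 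It is worth observing that your one-dimensional scheme is exactly the template the authors later scale up to two dimensions: the proof of Theorem \ref{T.7}(i) splits at $\mu=\lceil 1/x\rceil$, $v=\lceil 1/y\rceil$, uses $\sin t\le t$ on the small-index blocks, and replaces your telescoping-by-monotonicity step with the conjugate-Dirichlet-kernel summation of Lemma \ref{L.4} and the estimates of Lemma \ref{L.5}, since monotonicity is weakened there to the $DGM$ conditions. So your argument is a faithful special case of the machinery of the paper, but relies on monotonicity in a way the general results cannot.
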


Several classes of sequences have been introduced to generalize Theorem \ref%
{T.1} (see \cite{Z&Z&Y}, \cite{Y&Z}, \cite{K}, \cite{S}). These classes are
larger than the class of monotone sequences and contain sequences of complex
numbers as well. The definitions of the latest classes are the following:

\begin{align*}
& MVBVS=\Bigg\{\{c_{k}\}_{k=1}^{\infty }\subset \mathbb{C},\exists \mathcal{C%
}>0,\lambda \geq 2\;:\;\sum_{k=n}^{2n}|\Delta _{1}c_{k}|\leq \frac{\mathcal{C%
}}{n}\sum_{k=\lceil n/\lambda \rceil }^{\lceil \lambda n\rceil
}|c_{k}|,n\geq \lambda \Bigg\}, \\
& SBVS=\Bigg\{\{c_{k}\}_{k=1}^{\infty }\subset \mathbb{C},\exists \mathcal{C}%
>0,\lambda \geq 2\;:\;\sum_{k=n}^{2n-1}|\Delta _{1}c_{k}|\leq \frac{\mathcal{%
C}}{n}\left( \sup_{m\geq \lceil n/\lambda \rceil
}\sum_{k=m}^{2m}|c_{k}|\right) ,n\geq \lambda \Bigg\}, \\
& SBVS_{2}=\Bigg\{\{c_{k}\}_{k=1}^{\infty }\subset \mathbb{C},\exists 
\mathcal{C}>0,b(k)\nearrow \infty \;:\;\sum_{k=n}^{2n-1}|\Delta
_{1}c_{k}|\leq \frac{\mathcal{C}}{n}\left( \sup_{m\geq
b(n)}\sum_{k=m}^{2m}|c_{k}|\right) ,n\geq 1\Bigg\}, \\
& GM(\beta ,r)=\Bigg\{\{c_{k}\}_{k=1}^{\infty }\subset \mathbb{C},\exists 
\mathcal{C}>0\;:\;\sum_{k=n}^{2n-1}|\Delta _{r}a_{k}|\leq \mathcal{C}\beta
_{n},n\geq 1\Bigg\},
\end{align*}%
where $\Delta _{r}c_{k}=c_{k}-c_{k+r}$ for $r\in \mathbb{N}$, and the
constants $\mathcal{C}$ and $\lambda $ depend only on $\{c_{k}\}$, a
sequence $\{b(k)\}_{k=1}^{\infty }\subset \mathbb{R}_{+}$ is increasing and $%
\beta :=(\beta _{k})$ is a non-negative sequence. It was proved in \cite{K}
that $MVBVS\subsetneqq SBVS\subsetneqq SBVS_{2}$ and a series (\ref{1.1})
with coefficients of complex numbers from the classes $MVBVS$, $SBVS$, $%
SBVS_{2}$, is uniformly convergent if (\ref{1.2}) is satisfied. In \cite{S}
Szal showed that the class $GM(\beta ^{\ast },r)$, with $\beta ^{\ast
}=\beta _{n}=\frac{1}{n}\sum_{k=\lceil n/\lambda \rceil }^{\lceil \lambda
n\rceil }|c_{k}|$ ($\lambda >1$) and $r=2$ is larger than $MVBVS$ and a
series (\ref{1.1}) with coefficients from $GM(\beta ^{\ast },2)$ is
uniformly convergent if (\ref{1.2}) holds. However, the necessity of
condition (\ref{1.2}) for the uniform convergence of (\ref{1.1}) is proved
for sine series with coefficients of non-negative numbers from one of the
above classes.

\section{Known results: uniform convergence of double sine series}

We start this section by giving some definitions and notations.\newline

A double series 
\begin{equation*}
\sum_{j=1}^{\infty }\sum_{k=1}^{\infty }c_{jk}
\end{equation*}%
of complex numbers converge regularly if the sum 
\begin{equation*}
\sum_{j=1}^{m}\sum_{k=1}^{n}c_{jk}
\end{equation*}%
converge to a finite number as $m$ and $n$ tend to infinity independently of
each other, moreover, both the column series and row series 
\begin{equation*}
\sum_{j=1}^{\infty }z_{jn},\qquad n=1,2,\ldots ,\qquad \text{and}\qquad
\sum_{k=1}^{\infty }z_{mk}\qquad m=1,2,\ldots
\end{equation*}%
are convergent. Or equivalently, if for any $\epsilon >0$ there exists a
positive number $m_{0}=m_{0}(\epsilon )$ such that 
\begin{equation*}
\left\vert \sum_{j=m}^{M}\sum_{k=n}^{N}z_{jk}\right\vert <\epsilon
\end{equation*}%
holds for any $m,n,M,N$ for which $m+n>m_{0}$, $1\leq m\leq M$ and $1\leq
n\leq N$.

A monotonically decreasing double sequence $\{c_{jk}\}_{j,k=1}^{\infty }$ is
a sequence of real numbers such that 
\begin{equation*}
\Delta _{10}c_{jk}\geq 0,\qquad \Delta _{01}c_{jk}\geq 0,\qquad \Delta
_{11}c_{jk}\geq 0,\qquad j,k=1,2,\ldots ,
\end{equation*}%
where 
\begin{align*}
& \Delta _{10}c_{jk}:=c_{jk}-c_{j+1,k},\qquad \Delta
_{01}c_{jk}:=c_{jk}-c_{j,k+1}, \\
& \Delta _{11}c_{jk}:=\Delta _{10}(\Delta _{01}c_{jk})=\Delta _{01}(\Delta
_{10}c_{jk})=c_{jk}-c_{j+1,k}-c_{j,k+1}+c_{j+1,k+1}.
\end{align*}

Let $\{c_{jk}\}_{j,k=1}^{\infty }$ be a double sequence of complex numbers.
Consider the double sine series 
\begin{equation}
\sum_{j=1}^{\infty }\sum_{k=1}^{\infty }c_{jk}\sin jx\sin ky.  \label{2.1}
\end{equation}

The two-dimensional extension of Theorem \ref{T.1} is due to \v Zak and \v
Sneider.

\begin{twr}
(\cite{Z&S}) If $\{c_{jk}\}_{j,k=1}^{\infty }\subset \mathbb{R}_{+}$ is a
monotonicity decreasing double sequence, then (\ref{2.1}) is uniformly
regularly convergent in $(x,y)$ if and only if 
\begin{equation}
jkc_{jk}\rightarrow 0\qquad \text{as}\qquad j+k\rightarrow \infty .
\label{2.2}
\end{equation}%
\label{T.2}
\end{twr}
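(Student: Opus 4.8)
The plan is to prove both directions by reducing the two-dimensional problem to the one-dimensional Chaundy--Jolliffe situation via summation by parts (Abel transformation) in each variable. Throughout I write $S_{MN}(x,y)=\sum_{j=1}^{M}\sum_{k=1}^{N}c_{jk}\sin jx\sin ky$ and denote by $\widetilde{D}_{n}(t)=\sum_{\nu=1}^{n}\sin\nu t$ the conjugate Dirichlet kernel, for which one has the standard bounds $|\widetilde{D}_{n}(t)|\leq \min\{n t,\pi/t\}$ on $(0,\pi]$. Since $c_{jk}\geq 0$ and $\widetilde D_n\geq 0$ is not quite available, I will instead use the classical fact that for a monotone one-dimensional sequence the tail $\sum_{k>n}c_k\sin ky$ is controlled by $\max_k k c_k$; the two-dimensional analogue is what we must build.

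First, for the sufficiency direction, assume $jkc_{jk}\to 0$. By symmetry and the regular-convergence criterion stated above, it suffices to bound $\big|\sum_{j=m}^{M}\sum_{k=n}^{N}c_{jk}\sin jx\sin ky\big|$ uniformly in $x,y\in(0,\pi]$ by a quantity tending to $0$ as $m+n\to\infty$. I would split the analysis according to the size of $x$ and $y$ relative to $1/m$ and $1/n$ (the four regions $x\lessgtr 1/m$, $y\lessgtr 1/n$). On the region where, say, $x\leq 1/m$ and $y\leq 1/n$, use $|\sin jx\sin ky|\leq jkxy\leq jk/(mn)$ together with monotonicity of $\{jkc_{jk}\}$ in an averaged sense and the hypothesis to get a bound like $\varepsilon_{m+n}$. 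On the mixed regions, perform Abel summation in the ``large-argument'' variable to replace the oscillating sum by the conjugate Dirichlet kernel, bounded by $\pi/x$ or $\pi/y$, then estimate the resulting single sum of $\Delta$'s using monotonicity ($\Delta_{10}c_{jk}\ge 0$, $\Delta_{01}c_{jk}\ge 0$, $\Delta_{11}c_{jk}\ge 0$), telescoping so that only boundary terms of the form $m c_{mk}$ or $n c_{jn}$ survive; these are $\le$ a tail of $jkc_{jk}$ after one more summation. Collecting the four regions yields uniform smallness, hence uniform regular convergence. A two-dimensional Abel transformation uses all three differences $\Delta_{10},\Delta_{01},\Delta_{11}$, and the non-negativity of the coefficients and of these differences is exactly what makes the telescoped boundary terms collapse cleanly.

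Second, for the necessity direction, suppose $S_{MN}$ converges regularly and uniformly but $jkc_{jk}\not\to 0$; then there is $\delta>0$ and a sequence $(j_i,k_i)$ with $j_i+k_i\to\infty$ and $j_ik_ic_{j_ik_i}\geq\delta$. Evaluate the difference of partial sums between well-chosen indices at the test point $x=1/j_i$, $y=1/k_i$ (the natural two-dimensional analogue of the classical choice $x=\pi/(2n)$). Using monotonicity, on the block $j\in[\lceil j_i/2\rceil, j_i]$, $k\in[\lceil k_i/2\rceil, k_i]$ one has $c_{jk}\geq c_{j_ik_i}$, and $\sin jx\sin ky\geq \sin\frac12\cdot\sin\frac12>0$ on that block, so the corresponding partial-sum difference is $\gtrsim j_ik_i c_{j_ik_i}\cdot\frac{1}{j_i k_i}\cdot j_i k_i/(\text{something})$; a careful count shows it is bounded below by an absolute multiple of $\delta$, contradicting the Cauchy criterion for uniform regular convergence. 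This mirrors the one-dimensional argument in Theorem~\ref{T.1}.

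The main obstacle I anticipate is the bookkeeping in the mixed regions of the sufficiency proof: a genuine double Abel transformation produces a sum involving $\Delta_{11}c_{jk}$ multiplied by a product of two conjugate kernels, plus two ``edge'' sums involving $\Delta_{10}$ and $\Delta_{01}$ multiplied by one kernel and one coefficient, plus the four corner terms. Keeping all the kernel bounds ($\min\{jx,\pi/x\}$ etc.) aligned with the right summation ranges, and verifying that every surviving term is dominated by a tail of $\{jkc_{jk}\}$ and hence uniformly small once $m+n$ is large, is where the real work lies; the non-negativity and the three monotonicity inequalities are used precisely to make the intermediate (non-boundary) contributions vanish by telescoping rather than merely be estimated.
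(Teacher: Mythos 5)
Your outline is correct, and the paper itself contains no proof of Theorem \ref{T.2} (it is quoted from \v Zak and \v Sneider), but your scheme is essentially the same one the paper deploys for its generalization, Theorem \ref{T.7}: fix $(x,y)$, split the index ranges at $\mu=\lceil 1/x\rceil$, $v=\lceil 1/y\rceil$, use $\sin t\le t$ on the small blocks and Abel transformation with conjugate-Dirichlet kernels bounded by $\pi/(4x)$, $\pi/(4y)$ on the large ones (the three non-negative differences $\Delta_{10},\Delta_{01},\Delta_{11}$ telescoping exactly to boundary terms dominated by tails of $jkc_{jk}$), and for necessity a block lower bound over $j\in[m,2m]$, $k\in[n,2n]$ at test points $x\approx\pi/(4m)$, $y\approx\pi/(4n)$, exactly as in Lemma \ref{L.3} and part (ii) of the proof of Theorem \ref{T.7}. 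Two small precisions: the case distinction must be made on the summation indices relative to $\mu$ and $v$ (so a block with $m\le\mu\le M$ is first split there), not on $x,y$ versus $1/m,1/n$; and in your first region no ``averaged monotonicity'' of $\{jkc_{jk}\}$ is needed or available --- the termwise bound $jkc_{jk}<\epsilon$ together with the count of at most $\mu v\le 1/(xy)$ terms already gives the estimate.
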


Theorem \ref{T.2} was generalized by K\'{o}rus and M\'{o}ricz \cite{2K&M}
and by K\'{o}rus \cite{2K} (and also by Leindler \cite{L}). They have defined new classes of double
sequences to obtain those generalizations. We present below those
definitions and their results.

\begin{df}
(\cite{2K&M}) A double sequence $\{c_{jk}\}_{j,k=1}^{\infty }\subset \mathbb{%
C}$ belongs to the class $MVBVDS$, if there exist positive constants $%
\mathcal{C}$ and $\lambda \geq 2$, depending only on $\{c_{jk}\}$, such
that: 
\begin{align}
\sum_{j=m}^{2m-1}{|\Delta _{10}c_{jn}|}\leq & \frac{\mathcal{C}}{m}%
\sum_{\lceil j=m/\lambda \rceil }^{\lceil \lambda m\rceil }|{c_{jn}}|,\
m\geq \lambda ,\ n\geq 1,  \label{2.3} \\
\sum_{k=n}^{2n-1}{|\Delta _{01}c_{km}|}\leq & \frac{\mathcal{C}}{n}%
\sum_{\lceil k=n/\lambda \rceil }^{\lceil \lambda n\rceil }|{c_{km}}|,\
n\geq \lambda ,\ m\geq 1,  \label{2.4} \\
\sum_{j=m}^{2m-1}\sum_{k=n}^{2n-1}{|\Delta _{11}c_{jk}|}\leq & \frac{%
\mathcal{C}}{mn}\sum_{\lceil j=m/\lambda \rceil }^{\lceil \lambda m\rceil
}\sum_{\lceil k=n/\lambda \rceil }^{\lceil \lambda n\rceil }|{c_{jk}}|,\
m,n\geq \lambda \mathbf{.}  \label{2.5}
\end{align}
\end{df}

\begin{twr}
(\cite{2K&M})

\begin{description}
\item[(i)] If $\{c_{jk}\}_{j,k=1}^{\infty}\subset\mathbb{C}$ belongs to the
class $MVBVDS$ and (\ref{2.2}) holds, then (\ref{2.1}) converges regularly,
uniformly in $(x,y)$.

\item[(ii)] Conversely, if $\{c_{jk}\}_{j,k=1}^{\infty}\subset\mathbb{R}_+$
belongs to the class $MVBVDS$ and (\ref{2.1}) is uniformly regularly
convergent in $(x,y)$, then (\ref{2.2}) is satisfied.
\end{description}

\label{T.3}
\end{twr}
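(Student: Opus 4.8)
\noindent\emph{A proof proposal.}

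\emph{Part (i).} The plan is to verify the Cauchy criterion for uniform regular convergence recalled in Section~2, i.e.\ to show that for every $\varepsilon>0$ there is $m_{0}$ such that $\bigl|\sum_{j=m}^{M}\sum_{k=n}^{N}c_{jk}\sin jx\sin ky\bigr|<\varepsilon$ for all $(x,y)$ whenever $m\le M$, $n\le N$ and $m+n>m_{0}$. Using (\ref{2.2}) I would first fix $m_{0}$ so large that $jk|c_{jk}|<\eta$ throughout the index block (legitimate since $j\ge m$, $k\ge n$), where $\eta$ is a small multiple of $\varepsilon$. Then I would decompose $(0,\pi]^{2}$ by comparing $x$ with $1/j$ and $y$ with $1/k$: with $p=\lfloor 1/x\rfloor$, $q=\lfloor 1/y\rfloor$, cut $[m,M]\times[n,N]$ at $j=p$, $k=q$ into at most four sub-rectangles. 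On the low-frequency parts ($j\le p$ and/or $k\le q$) one estimates $|\sin jx\sin ky|\le jk\,xy$; on the high-frequency parts one applies Abel summation (once or twice) against the conjugate Dirichlet kernels $\widetilde D_{a,b}(t)=\sum_{i=a}^{b}\sin it$, which satisfy $|\widetilde D_{a,b}(t)|\le 2\pi/t$ on $(0,\pi]$, and estimates the resulting sums $\sum|\Delta_{10}c_{jk}|$, $\sum|\Delta_{01}c_{jk}|$, $\sum|\Delta_{11}c_{jk}|$ (together with the corner values) over the high-frequency ranges by breaking them into dyadic blocks and invoking (\ref{2.3}), (\ref{2.4}), (\ref{2.5}) on each block in combination with $|c_{jk}|\le\eta/(jk)$. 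I expect the main obstacle to lie exactly here: the decomposition must be organised so that no spurious logarithm appears. The kernel bound $2\pi/x$ (resp.\ $2\pi/y$) has to be pulled out \emph{before} the dyadic decomposition, since it is uniform over the whole range $j>p$, whereas keeping $\min(j,1/x)$ inside a dyadic sum would give $\sum_{2^{i}}\min(2^{i},1/x)2^{-i}\sim\log(1/x)$. Once it is extracted, (\ref{2.5}) gives $\sum_{j>p}\sum_{k>q}|\Delta_{11}c_{jk}|\lesssim\eta/(pq)$, and $\tfrac{1}{xy}\cdot\tfrac{\eta}{pq}=O(\eta)$ because $p=\lfloor 1/x\rfloor$, $q=\lfloor 1/y\rfloor$; the low-frequency part contributes $\lesssim xy\cdot\eta\cdot pq=\eta(xp)(yq)=O(\eta)$, and the mixed parts combine the two devices. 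The finitely many dyadic blocks near the origin are absorbed by a crude estimate, which is harmless since if $\lfloor 1/x\rfloor$ is small then $m$ is small and hence $n$ is large, keeping $jk|c_{jk}|<\eta$ there as well.

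\emph{Part (ii).} For necessity I would use two ingredients. The first is a one-dimensional averaging observation: if $\{d_{l}\}_{l\ge1}\subset\mathbb{R}_{+}$ satisfies the $MVBVS$-inequality with constants $\mathcal{C},\lambda$, then for $m\ge\lambda$ and any $k\in[m,2m-1]$ one has $d_{m}\le d_{k}+\sum_{l=m}^{2m-1}|\Delta_{1}d_{l}|\le d_{k}+\frac{\mathcal{C}}{m}\sum_{l=\lceil m/\lambda\rceil}^{\lceil\lambda m\rceil}d_{l}$, and averaging over $k$ gives $d_{m}\le\frac{1+\mathcal{C}}{m}\sum_{l=\lceil m/\lambda\rceil}^{\lceil\lambda m\rceil}d_{l}$. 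Applying this first to the rows $\{c_{jn}\}_{j}$ (admissible by (\ref{2.3})) and then to the columns $\{c_{jk}\}_{k}$ (admissible by (\ref{2.4})) one obtains, for $m,n\ge\lambda$,
\[
mn\,c_{mn}\le(1+\mathcal{C})^{2}\sum_{j=\lceil m/\lambda\rceil}^{\lceil\lambda m\rceil}\ \sum_{k=\lceil n/\lambda\rceil}^{\lceil\lambda n\rceil}c_{jk}=:(1+\mathcal{C})^{2}S_{mn}.
\]
The second ingredient is that $S_{mn}\to0$ as $m+n\to\infty$: for a block $[\mu,2\mu]\times[\nu,2\nu]$ one evaluates the corresponding tail sum at $x=\pi/(4\mu)$, $y=\pi/(4\nu)$, where each factor $\sin jx\sin ky\ge\tfrac12$; uniform regular convergence then forces $\sum_{j=\mu}^{2\mu}\sum_{k=\nu}^{2\nu}c_{jk}<2\varepsilon$ once $\mu+\nu$ is large, and covering the rectangle $[\lceil m/\lambda\rceil,\lceil\lambda m\rceil]\times[\lceil n/\lambda\rceil,\lceil\lambda n\rceil]$ by boundedly many (depending only on $\lambda$) such blocks yields $S_{mn}\le C_{\lambda}\varepsilon$. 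Hence $mn\,c_{mn}\to0$ along $m,n\ge\lambda$.

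\emph{The remaining boundary regimes} of Part~(ii), where $\min(j,k)$ stays bounded while $j+k\to\infty$, I would reduce to the one-dimensional theory of Section~1: fixing $k=n_{0}$ and taking $y=\pi/(2n_{0})$ shows that $\sum_{j}c_{jn_{0}}\sin jx$ converges uniformly in $x$, and since $\{c_{jn_{0}}\}_{j}\subset\mathbb{R}_{+}$ belongs to $MVBVS$ by (\ref{2.3}), the necessity direction of those results (see \cite{K}) gives $j\,c_{jn_{0}}\to0$; the case of bounded $j$ is symmetric. Combined with the previous regime this proves (\ref{2.2}). Note that the necessity part uses only (\ref{2.3}), (\ref{2.4}) and the non-negativity of the coefficients, whereas (\ref{2.5}) enters only in the sufficiency Part~(i).
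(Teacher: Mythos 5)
Your proposal is sound in outline, but note first that the paper does not actually prove Theorem \ref{T.3}: it is quoted from K\'orus--M\'oricz \cite{2K&M}, and the paper only proves the generalization Theorem \ref{T.7} for the classes $DGM(\cdot,\cdot,\cdot,2)$. Measured against that proof (which has the same architecture as the original one in \cite{2K&M}), your route is essentially the standard one, with a few genuine differences worth recording. In part (i) you work with first differences and the kernel $\sum_{i=a}^{b}\sin it$, bounded by $\pi/t$ on all of $(0,\pi)$, so you avoid the split of $(0,\pi)$ at $\pi/2$ that the paper needs for its $r=2$ kernels $\widetilde D_{j,2}$, whose denominator $\sin x$ vanishes at $\pi$; your explicit cut of the summation block at $j=\lfloor 1/x\rfloor$, $k=\lfloor 1/y\rfloor$ plays the role of the paper's case analysis (a)--(d) with $\mu=\lceil 1/x\rceil$, $v=\lceil 1/y\rceil$, and your dyadic estimation of the tails $\sum\sum|\Delta_{11}c_{jk}|$, $\sum|\Delta_{10}c_{jk}|$, $\sum|\Delta_{01}c_{jk}|$ using (\ref{2.3})--(\ref{2.5}) together with $jk|c_{jk}|<\eta$ is exactly the content of the paper's Lemmas \ref{L.1} and \ref{L.2} (stated there for $\Delta_{22}$, $\Delta_{20}$, $\Delta_{02}$); your observation that the kernel bound must be pulled out before the dyadic decomposition is indeed the point that prevents a spurious logarithm. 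One sentence is off: ``if $\lfloor 1/x\rfloor$ is small then $m$ is small and hence $n$ is large'' is neither true nor needed; what the argument actually requires is that every index brought in by (\ref{2.3})--(\ref{2.5}) satisfies $j+k\ge(m+n)/\lambda$, so (\ref{2.2}) applies once $m+n$ is large -- a harmless repair. In part (ii) you replace the paper's Lemma \ref{L.3} (a telescoping identity that bounds $mnc_{mn}$ by block sums located at $b_{1}(m)$, $b_{2}(n)$, $b_{3}(m+n)$, which is what the weaker $DGM$/$SBVDS$ hypotheses force) by an averaging argument exploiting the local form of the $MVBVDS$ right-hand sides, followed by testing dyadic blocks at $x=\pi/(4\mu)$, $y=\pi/(4\nu)$; this is cleaner in the $MVBVDS$ setting (the testing step is the same device as the paper's choice $x_{1}(m)=\pi/(4m)$, etc.), though it does not extend to the classes treated in the paper. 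Your reduction of the boundary rows and columns to the one-dimensional necessity for non-negative $MVBVS$ sequences matches what the paper does at the end of its proof of Theorem \ref{T.7}(ii).
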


\begin{df}
(\cite{2K}) A double sequence $\{c_{jk}\}_{j,k=1}^{\infty }\subset \mathbb{C}
$ belongs to the class $SBVDS_{1}$, if there exist positive constants $%
\mathcal{C}$ and integer $\lambda \geq 2$ and sequences $\{b_{1}(l)%
\}_{l=1}^{\infty }$, $\{b_{2}(l)\}_{l=1}^{\infty }$, $\{b_{3}(l)\}_{l=1}^{%
\infty }$, each one tends (not necessarily monotonically) to infinity, all
of them depending only on $\{c_{jk}\}$, such that: 
\begin{align}
\sum_{j=m}^{2m-1}{|\Delta _{10}c_{jn}|}\leq & \frac{\mathcal{C}}{m}\left(
\max_{b_{1}(m)\leq M\leq \lambda b_{1}(m)}\sum_{j=M}^{2M}|{c_{jn}}|\right)
,\ m\geq \lambda ,\ n\geq 1,  \label{2.6} \\
\sum_{k=n}^{2n-1}{|\Delta _{01}c_{km}|}\leq & \frac{\mathcal{C}}{n}\left(
\max_{b_{2}(n)\leq M\leq \lambda b_{2}(n)}\sum_{k=N}^{2N}|{c_{km}}|\right)
,\ n\geq \lambda ,\ m\geq 1,  \label{2.7} \\
\sum_{j=m}^{2m-1}\sum_{k=n}^{2n-1}{|\Delta _{11}c_{jk}|}\leq & \frac{%
\mathcal{C}}{mn}\left( \sup_{{M+N}\geq
b_{3}(m+n)}\sum_{j=M}^{2M}\sum_{k=N}^{2N}{|c_{jk}|}\right) ,\ m,n\geq
\lambda \mathbf{.}  \label{2.8}
\end{align}
\end{df}

\begin{twr}
(\cite{2K}) If $\{c_{jk}\}_{j,k=1}^\infty\subset\mathbb{R_+}$ belongs to the
class $SBVDS_1$ and (\ref{2.1}) is uniform regularly convergent in $(x,y)$,
then (\ref{2.2}) is satisfied.\label{T.4}
\end{twr}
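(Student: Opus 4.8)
The plan is to mimic, in two dimensions, the classical necessity argument of Chaundy and Jolliffe: one picks the evaluation point $(x,y)$ so that the kernel $\sin jx\,\sin ky$ is bounded away from $0$ on a whole dyadic rectangle, thereby converting the uniform Cauchy condition for \eqref{2.1} into an estimate for sums of the (non-negative) coefficients, and then one runs a dyadic summation against the defining inequalities \eqref{2.6}--\eqref{2.8}.

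The first step I would carry out is the following elementary reduction. Applying the Cauchy criterion for uniform regular convergence to the rectangle $M\le j\le 2M$, $N\le k\le 2N$ and evaluating at $x=\pi/(4M)$, $y=\pi/(4N)$, one has $jx\in[\pi/4,\pi/2]$ and $ky\in[\pi/4,\pi/2]$, so $\sin jx\,\sin ky\ge\tfrac12$; since $c_{jk}\ge0$ this gives
\[
\tfrac12\sum_{j=M}^{2M}\sum_{k=N}^{2N}c_{jk}\le\left\vert\sum_{j=M}^{2M}\sum_{k=N}^{2N}c_{jk}\sin jx\,\sin ky\right\vert ,
\]
so that for each $\varepsilon>0$ there is $m_0(\varepsilon)$ with $\sum_{j=M}^{2M}\sum_{k=N}^{2N}c_{jk}<2\varepsilon$ whenever $M+N>m_0(\varepsilon)$. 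In particular $c_{jk}\to0$ as $j+k\to\infty$ and, keeping one index fixed, $c_{jk}\to0$ as the other tends to infinity; these facts make the telescoping identities
\[
c_{jn}=\sum_{l\ge j}\Delta_{10}c_{ln},\qquad c_{mk}=\sum_{l\ge k}\Delta_{01}c_{ml},\qquad c_{mn}=\sum_{j\ge m}\sum_{k\ge n}\Delta_{11}c_{jk}
\]
valid, the last series being absolutely convergent once the estimate below is in place.

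Next I would treat the interior range $m,n\ge\lambda$. Decompose $[m,\infty)\times[n,\infty)$ into the dyadic rectangles $[2^{p}m,2^{p+1}m-1]\times[2^{q}n,2^{q+1}n-1]$ ($p,q\ge0$), each of the form $[M,2M-1]\times[N,2N-1]$ with $M,N\ge\lambda$, apply \eqref{2.8} to each one, and bound every block sum appearing in the supremum there by $2\varepsilon$; this last move is legitimate provided $b_{3}(m+n)>m_0(\varepsilon)$, which holds for all $m+n$ beyond a threshold $T(\varepsilon)$ because $b_{3}\to\infty$. Summing the resulting geometric series in $p$ and $q$ gives $c_{mn}\le 8\mathcal C\varepsilon/(mn)$, hence $mn\,c_{mn}\le 8\mathcal C\varepsilon$ for $m,n\ge\lambda$, $m+n\ge T(\varepsilon)$. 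The remaining strips $m<\lambda$ (and symmetrically $n<\lambda$) I would handle one frozen index at a time: for a fixed $m<\lambda$ the row $(c_{mk})_{k}$ satisfies the one-dimensional estimate \eqref{2.7}, and the same dyadic summation in the variable $k$ alone — now using \eqref{2.7}, the bound $\sum_{k=N}^{2N}c_{mk}\le\sum_{j=m}^{2m}\sum_{k=N}^{2N}c_{jk}<2\varepsilon$, and $b_{2}\to\infty$ — gives $n\,c_{mn}\to0$ as $n\to\infty$. Since there are only finitely many such $m$, this holds uniformly in $m$, so $mn\,c_{mn}\le\lambda\,n\,c_{mn}\to0$ in that regime, and symmetrically for $n<\lambda$ via \eqref{2.6}. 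Patching the three regimes together produces, for each $\varepsilon>0$, a single threshold past which $mn\,c_{mn}$ is $O(\varepsilon)$, which is exactly \eqref{2.2}.

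I expect no conceptually hard step; the real care is bookkeeping. The functions $b_{1},b_{2},b_{3}$ are only assumed to tend to infinity and not monotonically, so the passage ``$b_{i}(t)$ exceeds $m_0(\varepsilon)$ for all large $t$'' must be read off directly from the definition of divergence to $+\infty$ rather than from monotonicity; and the interior dyadic estimate leaves out the finitely many bottom rows and leftmost columns, which is precisely why the definition of $SBVDS_{1}$ carries the one-dimensional inequalities \eqref{2.6} and \eqref{2.7} next to \eqref{2.8} — the strips are where the single-variable Chaundy--Jolliffe mechanism has to be re-run. Everything else is routine geometric summation together with the justification of the rearrangement in the telescoping identity.
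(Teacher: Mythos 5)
Your argument is correct, and it is a genuinely different route from the one the paper takes. The paper never proves Theorem \ref{T.4} directly: it quotes it from K\'orus and recovers it (see Remark 1) as a consequence of the more general Theorem \ref{T.7}(ii), whose coefficients lie in $DGM(_2\alpha,_2\beta,_2\gamma,2)$, combined with the inclusion $SBVDS_1=DGM(_2\alpha,_2\beta,_2\gamma,1)\subset DGM(_2\alpha,_2\beta,_2\gamma,2)$ of Corollary \ref{C.1}. Because that theorem works with step-two differences $\Delta_{20},\Delta_{02},\Delta_{22}$, telescoping alone cannot recover $c_{mn}$, and the paper must first establish the combinatorial Lemma \ref{L.3} (averaging over $\mu,v$) and then test the uniform Cauchy condition at four families of points $x_1(m)=\pi/(4m)$, $x_2(m)=\pi/(4\lambda b_1(m))$, $y_1(n)$, $y_2(n)$ to bound the four block sums appearing there. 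You instead exploit that for $SBVDS_1$ the differences are of step one, so $\Delta_{11}$, $\Delta_{10}$, $\Delta_{01}$ telescope exactly to $c_{mn}$: you use the test points once, at the level of arbitrary dyadic blocks, to turn uniform regular convergence into $\sum_{j=M}^{2M}\sum_{k=N}^{2N}c_{jk}<2\epsilon$ for $M+N$ large, and then feed this into the dyadic tail summation of \eqref{2.8} for the interior range $m,n\ge\lambda$ and into \eqref{2.6}--\eqref{2.7} on the finitely many strips $m<\lambda$ or $n<\lambda$; your handling of the non-monotone $b_i$ (choosing a threshold beyond which $b_i$ stays above $m_0$, valid for all the shifted arguments $2^pm+2^qn$) is the right way to read the hypothesis. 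Your approach buys a shorter, self-contained proof of the quoted theorem, essentially in the spirit of K\'orus's original argument, while the paper's detour through $DGM(\cdot,\cdot,\cdot,2)$ buys the stronger statement of Theorem \ref{T.7}(ii) for the larger class, from which Theorem \ref{T.4} falls out as a special case.
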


\begin{df}
(\cite{2K}) A double sequence $\{c_{jk}\}_{j,k=1}^{\infty }\subset \mathbb{C}
$ belongs to the class $SBVDS_{2}$, if there exist positive constants $%
\mathcal{C}$ and integer $\lambda \geq 1$ and sequence $\{b(l)\}_{l=1}^{%
\infty }$ tending monotonically to infinity, depending only on $\{c_{jk}\}$,
for which: 
\begin{align}
\sum_{j=m}^{2m-1}{|\Delta _{10}c_{jn}|}\leq & \frac{\mathcal{C}}{m}\left(
\sup_{M\geq b(m)}\sum_{j=M}^{2M}|{c_{jn}}|\right) ,\ m\geq \lambda ,\ n\geq
1,  \label{2.9} \\
\sum_{k=n}^{2n-1}{|\Delta _{01}c_{km}|}\leq & \frac{\mathcal{C}}{n}\left(
\sup_{N\geq b(n)}\sum_{k=N}^{2N}|{c_{km}}|\right) ,\ n\geq \lambda ,\ m\geq
1,  \label{2.10} \\
\sum_{j=m}^{2m-1}\sum_{k=n}^{2n-1}{|\Delta _{11}c_{jk}|}\leq & \frac{%
\mathcal{C}}{mn}\left( \sup_{{M+N}\geq b(m+n)}\sum_{j=M}^{2M}\sum_{k=N}^{2N}{%
|c_{jk}|}\right) ,\ m,n\geq \lambda \mathbf{.}  \label{2.11}
\end{align}
\end{df}

\begin{twr}
(\cite{2K}) If $\{c_{jk}\}_{j,k=1}^{\infty }\subset \mathbb{C}$ belongs to
the class $SBVDS_{2}$ and (\ref{2.2}) holds, then the regularly convergent
of (\ref{2.1}) is uniform in $(x,y)$.\label{T.5}
\end{twr}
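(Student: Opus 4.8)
The plan is to verify the uniform form of the Cauchy criterion for regular convergence recalled in Section~2; since that criterion is equivalent to regular convergence, establishing its uniform version gives at once the regular convergence of $(\ref{2.1})$ at each $(x,y)$ and its uniformity. Concretely, given $\varepsilon>0$ we must produce $m_0$, independent of $(x,y)$, with $\bigl|\sum_{j=m}^{M}\sum_{k=n}^{N}c_{jk}\sin jx\sin ky\bigr|<\varepsilon$ whenever $1\le m\le M$, $1\le n\le N$ and $m+n>m_0$. As the summand is $2\pi$-periodic in each variable and, up to sign, invariant under reflecting either variable about $\pi$, it is enough to treat $(x,y)\in(0,\pi]\times(0,\pi]$. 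Using $(\ref{2.2})$ fix $K=K(\varepsilon)$ with $jk|c_{jk}|\le\varepsilon$ once $j+k\ge K$; since $b\nearrow\infty$, fix $L$ with $b(l)\ge K$ for $l\ge L$, and choose $m_0$ large in terms of $K,L,\lambda$ and the finitely many one-dimensional thresholds mentioned next. When $\lambda\ge2$ the (fewer than $\lambda$) rows with $m\le j\le\lambda-1$ and columns with $n\le k\le\lambda-1$ are stripped off first: such a row forces $m<\lambda$, hence $n>m_0-\lambda$ is large, and for the fixed $j$ involved the sequence $(c_{jk})_k$ lies in $SBVS_2$ by $(\ref{2.10})$ with $kc_{jk}\to0$ by $(\ref{2.2})$, so $\sum_k c_{jk}\sin ky$ converges uniformly in $y$ by the one-dimensional $SBVS_2$ theory from Section~1 and its tail from the large index $n$ is uniformly small; columns are handled symmetrically via $(\ref{2.9})$. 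We may thus assume $j,k\ge\lambda$ and cut the index rectangle along $j=\lfloor1/x\rfloor$ and $k=\lfloor1/y\rfloor$ into four (possibly empty) blocks, according to whether $j$ does or does not exceed $\lfloor1/x\rfloor$ and likewise for $k$.

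Set $\widetilde D_p(t):=\sum_{l=1}^{p}\sin lt$, so $|\widetilde D_p(t)|\le\pi/t$ on $(0,\pi]$. On the block $j\le\lfloor1/x\rfloor$, $k\le\lfloor1/y\rfloor$ I use $|\sin jx\sin ky|\le jk\,xy$ together with $jk|c_{jk}|\le\varepsilon$ (valid since $j+k\ge m+n>m_0\ge K$); the block is then $\le xy\,\varepsilon\lfloor1/x\rfloor\lfloor1/y\rfloor\le\varepsilon$. On the block $j\le\lfloor1/x\rfloor$, $k>\lfloor1/y\rfloor$ I keep $|\sin jx|\le jx$, apply Abel's transformation to the inner sum in $k$, and bound the kernels by $\pi/y$, reducing the block to $\tfrac{\pi x}{y}\sum_j j\bigl(\sum_k|\Delta_{01}c_{jk}|+|c_{jN}|+|c_{jn'}|\bigr)$ with $n'=\max\{n,\lambda,\lfloor1/y\rfloor+1\}$. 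A dyadic splitting of the $k$-range and $(\ref{2.10})$ on each dyadic block turn $\sum_k|\Delta_{01}c_{jk}|$ into a geometric series $\sum_i\tfrac{\mathcal C}{n_i}\sup_{N\ge b(n_i)}\sum_{k=N}^{2N}|c_{jk}|$; by the choice of $m_0$, every index occurring in these suprema satisfies $|c_{jk}|\le\varepsilon/(jk)$, so the suprema are $\le C\varepsilon/j$ and the series is $\le C\varepsilon/(jn')$. Since also $|c_{jN}|,|c_{jn'}|\le\varepsilon/(jn')$, the block is $\le\tfrac{\pi x}{y}\sum_{j\le1/x}\tfrac{C\varepsilon}{n'}\le\tfrac{C\varepsilon}{yn'}\le C\varepsilon$ because $yn'>1$. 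The block with the roles of $j$ and $k$ exchanged is identical.

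The decisive block is $j>\lfloor1/x\rfloor$, $k>\lfloor1/y\rfloor$; put $m'=\max\{m,\lambda,\lfloor1/x\rfloor+1\}$ and $n'$ as above, so $m',n'\ge\lambda$ and $xm'>1$, $yn'>1$. A double Abel transformation writes this block as a principal double sum $\sum_{j=m'}^{M-1}\sum_{k=n'}^{N-1}\Delta_{11}c_{jk}\,\widetilde D_j(x)\widetilde D_k(y)$, plus four single sums carrying $\Delta_{10}c_{jN}$, $\Delta_{10}c_{jn'}$, $\Delta_{01}c_{Mk}$, $\Delta_{01}c_{m'k}$ against one fixed kernel each, plus four corner terms $c_{MN}\widetilde D_M(x)\widetilde D_N(y)$ and the like. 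Bound every kernel by $\pi/x$ or $\pi/y$. A corner is $\le\pi^2|c_{MN}|/(xy)$; since $M+N\ge m'+n'\ge m+n>m_0\ge K$ we have $|c_{MN}|\le\varepsilon/(MN)\le\varepsilon/(m'n')$, and $m'n'>1/(xy)$, so the corner is $\le\pi^2\varepsilon$. For the principal sum, tile $[m',M-1]\times[n',N-1]$ by dyadic rectangles $[m_i,2m_i)\times[n_l,2n_l)$ and apply $(\ref{2.11})$: its right-hand supremum is over $M'+N'\ge b(m_i+n_l)\ge b(m'+n')\ge b(m+n)\ge K$, so there $\sum_{j=M'}^{2M'}\sum_{k=N'}^{2N'}|c_{jk}|\le\varepsilon\bigl(\sum_{j=M'}^{2M'}\tfrac1j\bigr)\bigl(\sum_{k=N'}^{2N'}\tfrac1k\bigr)\le C\varepsilon$; summing the resulting bounds $\tfrac{\mathcal CC\varepsilon}{m_in_l}$ geometrically over the dyadic grid gives $\le\tfrac{4\mathcal CC\varepsilon}{m'n'}$, whence the principal sum is $\le\tfrac{\pi^2}{xy}\cdot\tfrac{4\mathcal CC\varepsilon}{m'n'}\le4\pi^2\mathcal CC\varepsilon$. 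The four single sums are estimated the same way through $(\ref{2.9})$ and $(\ref{2.10})$, the fixed index (for instance $N\ge n'$) keeping $j+N\ge K$ over the dyadic ranges. Summing the contributions of all four blocks yields $\bigl|\sum_{j=m}^{M}\sum_{k=n}^{N}c_{jk}\sin jx\sin ky\bigr|\le C'\varepsilon$ with $C'$ an absolute constant, which is the assertion.

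The heart of the matter, and the step I expect to cost the most care, is the decisive block — specifically the verification that after every dyadic decomposition the suprema on the right of $(\ref{2.9})$--$(\ref{2.11})$ are taken over indices so far out that $(\ref{2.2})$ really delivers the factor $\varepsilon$. This is precisely where the definition of $SBVDS_2$ — a single sequence $b$ tending \emph{monotonically} to infinity — enters, through $b(m_i+n_l)\ge b(m'+n')\ge b(m+n)$ and $m+n>m_0$. Everything else is routine: organizing the dyadic blocks so that the $\sum 1/(m_in_l)$-type series telescope geometrically to $\sim1/(m'n')$ and cancel the $1/(xy)$ coming from the Dirichlet kernels, and disposing of the finitely many small-index rows and columns by the one-dimensional theory.
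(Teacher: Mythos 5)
Your argument is essentially sound: the uniform Cauchy criterion for regular convergence, the reduction to $(0,\pi]^2$, the split of the index rectangle at $\lfloor 1/x\rfloor,\lfloor 1/y\rfloor$, the double Abel transformation against conjugate-Dirichlet partial sums bounded by $\pi/t$, and the dyadic decompositions feeding (\ref{2.9})--(\ref{2.11}) together with (\ref{2.2}) all fit together, and the one delicate point you single out (that every supremum produced by the dyadic splitting sits at indices with $j+k$ beyond the threshold of (\ref{2.2})) is indeed the crux; it is settled by the two-case observation that either $m$ or $n$ exceeds $\max\{K,L\}$ once $m_0\ge K+L$, which your ``choose $m_0$ large in terms of $K,L,\lambda$'' covers. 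Where you differ from the paper is in the route, not the technique. The paper never proves Theorem \ref{T.5} directly: it is quoted from K\'orus and recovered (Remark 1) as a corollary of the stronger Theorem \ref{T.7}(i), valid for the wider class $DGM(_3\alpha,_3\beta,_3\gamma,2)$, via the inclusion $SBVDS_2=DGM(_3\alpha,_3\beta,_3\gamma,1)\subset DGM(_3\alpha,_3\beta,_3\gamma,2)$ of Theorem \ref{T.8}(i). The machinery of that proof is the $r=2$ analogue of yours: the tail estimates you derive inline by dyadic splitting are isolated there as Lemmas \ref{L.1} and \ref{L.2}, your kernel bounds correspond to Lemma \ref{L.5}, and the case analysis (a)--(d) at $\mu=\lceil 1/x\rceil$, $v=\lceil 1/y\rceil$ is the same; because the $r=2$ kernel $\widetilde D_{k,2}$ has $\sin x$ rather than $\sin(x/2)$ in the denominator, the paper must additionally split $x,y$ at $\pi/2$ and use $\pi-x$, $\pi-y$, whereas your $r=1$ kernels allow the single bound $\pi/x$ on $(0,\pi]$. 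What each approach buys: yours is a self-contained, somewhat simpler proof of exactly Theorem \ref{T.5}; the paper's detour through second differences yields the genuinely larger coefficient class (Theorem \ref{T.8}(ii) exhibits a sequence in the $r=2$ class but not in $SBVDS_2$), at the price of the extra lemmas and the finer case analysis.
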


\begin{twr}
(\cite{2K}) $MVBVDS\subsetneqq SBVDS_{1}\subsetneqq SBVDS_{2}.$
\end{twr}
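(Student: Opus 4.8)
The plan is to prove the chain of strict inclusions $MVBVDS\subsetneqq SBVDS_{1}\subsetneqq SBVDS_{2}$ in two parts: first the (non-strict) inclusions, then the strictness by exhibiting separating examples.

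\textbf{Step 1: $MVBVDS\subseteq SBVDS_{1}$.} Suppose $\{c_{jk}\}\in MVBVDS$ with constants $\mathcal{C}$ and $\lambda\geq 2$. I need to produce sequences $b_1,b_2,b_3\nearrow\infty$ and a constant so that \eqref{2.6}--\eqref{2.8} hold. The natural guess is $b_1(m)=b_2(m)=b_3(m)=\lceil m/\lambda\rceil$ (or a fixed multiple thereof), since these tend to infinity. The key observation is that for the one-dimensional estimate \eqref{2.3}, the sum $\sum_{j=\lceil m/\lambda\rceil}^{\lceil\lambda m\rceil}|c_{jn}|$ is a sum over a dyadic-type block of length $O(m)$, and can be covered by boundedly many blocks of the form $[M,2M]$ with $M$ ranging in $[\lceil m/\lambda\rceil,\lambda\lceil m/\lambda\rceil]$; hence $\frac1m\sum_{j=\lceil m/\lambda\rceil}^{\lceil\lambda m\rceil}|c_{jn}|\leq \frac{\mathcal{C}'}{m}\max_{b_1(m)\leq M\leq\lambda b_1(m)}\sum_{j=M}^{2M}|c_{jn}|$ with $\mathcal{C}'$ depending only on $\lambda$. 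The same argument handles \eqref{2.4}$\Rightarrow$\eqref{2.7}. For the mixed difference, \eqref{2.5} gives a double block sum which I bound by the supremum in \eqref{2.8} by noting that the double block $[\lceil m/\lambda\rceil,\lceil\lambda m\rceil]\times[\lceil n/\lambda\rceil,\lceil\lambda n\rceil]$ is contained in finitely many products $[M,2M]\times[N,2N]$ with $M+N\geq b_3(m+n)$ for a suitable choice like $b_3(l)=\lceil l/(2\lambda)\rceil$.

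\textbf{Step 2: $SBVDS_{1}\subseteq SBVDS_{2}$.} Here I start from \eqref{2.6}--\eqref{2.8} with data $\mathcal{C},\lambda,b_1,b_2,b_3$ (the $b_i$ not necessarily monotone) and must produce a single monotone $b\nearrow\infty$ and integer $\lambda'\geq 1$ making \eqref{2.9}--\eqref{2.11} hold. Two things must be arranged simultaneously: monotonicity of the majorant and a single sequence. For the first-difference estimates, $\max_{b_1(m)\leq M\leq\lambda b_1(m)}\sum_{j=M}^{2M}|c_{jn}|\leq \sup_{M\geq \tilde b(m)}\sum_{j=M}^{2M}|c_{jn}|$ whenever $\tilde b(m)\leq b_1(m)$; so it suffices to take $b(m):=\min\{1,\inf_{l\geq m}\min(b_1(l),b_2(l),b_3(l))\}$ — more precisely $b(m)=\max\{1,\lfloor\inf_{l\geq m}\min(b_1(l),b_2(l),b_3(l))\rfloor\}$, which is non-decreasing and, because each $b_i\to\infty$, still tends to infinity (the infimum over a tail of sequences going to infinity goes to infinity). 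Since $b(m)\leq b_i(m)$ for each $i$, each ``$\max$ over a block'' or ``$\sup$ over a tail'' majorant in $SBVDS_1$ is dominated by the corresponding $\sup_{M\geq b(m)}$ expression, and likewise for the double sum using $b(m+n)\leq b_3(m+n)$. One takes $\lambda'$ to be any integer $\geq 1$ (the constant $\mathcal{C}$ can be kept).

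\textbf{Step 3: strictness.} For $SBVDS_{1}\subsetneqq SBVDS_{2}$ I would exhibit a double sequence whose auxiliary sequence cannot be taken to tend to infinity along the prescribed bounded-window form of $SBVDS_1$ but works with a monotone tail-supremum; a product-type example $c_{jk}=a_j a_k$ with $\{a_j\}$ a known separator for $SBVS\subsetneqq SBVS_2$ (constructed in \cite{K}) is the natural candidate, after checking the mixed-difference condition factors through $\Delta_{11}(a_ja_k)=\Delta_1 a_j\,\Delta_1 a_k$. For $MVBVDS\subsetneqq SBVDS_{1}$, similarly take $c_{jk}=a_ja_k$ with $\{a_j\}\in SBVS\setminus MVBVS$ from \cite{K}: conditions \eqref{2.3}--\eqref{2.5} would force a self-referential lower bound of $MVBVS$ type on $\{a_j\}$, contradicting the choice, while \eqref{2.6}--\eqref{2.8} are inherited from the one-dimensional membership $\{a_j\}\in SBVS$.

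The main obstacle I anticipate is Step 2: ensuring that a \emph{single}, \emph{monotone} sequence $b$ can simultaneously absorb the three possibly-non-monotone sequences $b_1,b_2,b_3$ while still diverging to infinity — the ``$\inf$ over a tail'' trick handles monotonicity and divergence, but one must be careful that replacing $b_i(m)$ by the smaller $b(m)$ only \emph{enlarges} the majorizing supremum (so the inequality is preserved), which is immediate for $\sup_{M\geq b(m)}$ but requires the routine covering remark to pass from the bounded-window $\max$ in \eqref{2.6}--\eqref{2.7} to a tail $\sup$. The strictness examples are expected to be adaptations of the one-dimensional separators already available in the literature, so the genuinely new content is the bookkeeping in Steps 1--2.
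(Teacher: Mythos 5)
First, note that the paper you are working from does not prove this theorem at all: it is quoted from K\'orus \cite{2K} with a citation, so there is no in-paper argument to compare yours with; your proposal has to stand on its own. On that basis, your Steps 1--2 (the non-strict inclusions) are essentially sound but need small parameter repairs. In Step 1 the blocks $[M,2M]$ with $M\in[\lceil m/\lambda\rceil,\lambda\lceil m/\lambda\rceil]$ only reach up to about $2m$, while the $MVBVDS$ sum runs up to $\lceil\lambda m\rceil$; for $\lambda>2$ you must enlarge the window parameter (e.g.\ take the new $\lambda'$ of order $\lambda^2$) before the covering argument closes, and the number of covering blocks then enters the constant. In Step 2 your monotone $b(m)=\max\{1,\lfloor\inf_{l\geq m}\min(b_1(l),b_2(l),b_3(l))\rfloor\}$ works, and no covering is needed there since a max over $[b_i(m),\lambda b_i(m)]$ is trivially dominated by $\sup_{M\geq b(m)}$ once $b(m)\leq b_i(m)$; but you cannot take ``$\lambda'$ any integer $\geq 1$'' --- the $SBVDS_2$ inequalities are only obtained for $m,n\geq\lambda$, so you must keep $\lambda'\geq\lambda$.

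The genuine gap is Step 3: the theorem asserts \emph{strict} inclusions, and you do not prove either strictness, you only name ``natural candidates.'' Moreover the specific plan has a real obstruction, not just missing bookkeeping. For $MVBVDS\subsetneqq SBVDS_1$ with $c_{jk}=a_ja_k$, the failure of (\ref{2.3}) does reduce (for $a_n\neq 0$) to $\{a_j\}\notin MVBVS$, but membership in $SBVDS_1$ requires the bounded-window condition (\ref{2.6}), i.e.\ a bound by $\max_{b_1(m)\leq M\leq\lambda b_1(m)}\sum_{j=M}^{2M}|a_j|$, whereas the one-dimensional separator from \cite{K} is only known to satisfy the tail-supremum condition defining $SBVS$; a tail sup cannot in general be replaced by a max over a bounded window, so the claimed ``inheritance'' is exactly the point that has to be proved (and may force a different example). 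The same mismatch occurs for $SBVDS_1\subsetneqq SBVDS_2$: to show a sequence is \emph{not} in $SBVDS_1$ you must rule out \emph{every} choice of $\mathcal{C}$, $\lambda$ and all three sequences $b_1,b_2,b_3\to\infty$ in (\ref{2.6})--(\ref{2.8}), which is not the negation of the one-dimensional $SBVS$ condition and is not addressed by citing the $SBVS\subsetneqq SBVS_2$ separator. As it stands, the proposal proves $MVBVDS\subseteq SBVDS_1\subseteq SBVDS_2$ but not the strictness, so it does not establish the stated theorem; the separating examples (as in \cite{2K}) must be constructed and verified against the two-dimensional definitions explicitly.
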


Now, we shall define a new class of double sequences in the following way:
Let, for $r\in \mathbb{N}$, 
\begin{equation*}
\Delta _{r0}c_{jk} :=c_{jk}-c_{j+r,k},\Delta _{0r}c_{jk} :=c_{jk}-c_{j,k+r}
\end{equation*}
and 
\begin{equation*}
\Delta _{rr}c_{jk}:=\Delta _{r0}(\Delta _{0r}c_{jk}).
\end{equation*}

\begin{df}
A double sequence $\{c_{jk}\}_{j,k=1}^{\infty}\subset \mathbb{C} $ belongs
to the class $DGM(\alpha,\beta,\gamma,r)$ (called Double General Monotone),
if there exist positive constants $\mathcal{C}$ and integer $\lambda$
depending only on $\{c_{jk}\}$, for which:

\begin{align*}
& \sum_{j=m}^{2m-1}{|\Delta _{r0}c_{jn}|}\leq \mathcal{C}\alpha _{mn},\text{ 
}m\geq \lambda ,\ n\geq 1, \\
& \sum_{k=n}^{2n-1}{|\Delta _{0r}c_{km}|}\leq \mathcal{C}\beta _{mn},\text{ }%
n\geq \lambda ,\ m\geq 1, \\
& \sum_{j=m}^{2m-1}\sum_{k=n}^{2n-1}{|\Delta _{rr}c_{jk}|}\leq \mathcal{C}%
\gamma _{mn},\text{ }m,n\geq \lambda \mathbf{.}
\end{align*}%
hold, where $\alpha :=\{\alpha _{mn}\}_{m,n=1}^{\infty }$, $\beta :=\{\beta
_{mn}\}_{m,n=1}^{\infty }$, $\gamma :=\{\gamma _{mn}\}_{m,n=1}^{\infty }$
are non-negative double sequences and $r\in \mathbb{N}$.
\end{df}

Using our definition for $r=1$, we have:

\begin{description}
\item[1)] $MVBVDS\equiv DGM(_1\alpha,_1\beta,_1\gamma,1)$, where $%
\{_1\alpha\}, \{_1\beta\}\; \text{and}\;\{_1\gamma\}$ are the sequences
defined by the formulas on the right sides of the inequalities (\ref{2.3}), (%
\ref{2.4}) and (\ref{2.5}), respectively;

\item[2)] $SBVDS_1\equiv DGM(_2\alpha,_2\beta,_2\gamma,1)$, where $%
\{_2\alpha\}, \{_2\beta\}\; \text{and}\;\{_2\gamma\}$ are the sequences
defined by the formulas on the right sides of the inequalities (\ref{2.6}), (%
\ref{2.7}) and (\ref{2.8}), respectively;

\item[3)] $SBVDS_{2}\equiv DGM(_{3}\alpha ,_{3}\beta ,_{3}\gamma ,1)$, where 
$\{_{3}\alpha \},\{_{3}\beta \}\;\text{and}\;\{_{3}\gamma \}$ are the
sequences defined by the formulas on the right sides of the inequalities (%
\ref{2.9}), (\ref{2.10}) and (\ref{2.11}); respectively.
\end{description}

In this paper we shall present some properties of the classes $%
DGM(_{2}\alpha ,_{2}\beta ,_{2}\gamma ,2)$ and $DGM(_{3}\alpha ,_{3}\beta
,_{3}\gamma ,2)$. Moreover, we generalize and extend the results of K\'{o}%
rus (\cite{2K}) to the classes $DGM(_{2}\alpha ,_{2}\beta ,_{2}\gamma ,2)$
or $DGM(_{3}\alpha ,_{3}\beta ,_{3}\gamma ,2)$, respectively.

\section{Auxiliary results}


\begin{lem}
If $\{c_{jk}\}\subset \mathbb{C}$ is such that condition (\ref{2.2}) and the
inequality 
\begin{equation}
\sum_{j=m}^{2m-1}\sum_{k=n}^{2n-1}{|\Delta _{22}c_{jk}|}\leq \frac{\mathcal{C%
}}{mn}\left( \sup_{{M+N}\geq b(m+n)}\sum_{j=M}^{2M}\sum_{k=N}^{2N}{|c_{jk}|}%
\right) ,\ m,n\geq \lambda  \label{3.1}
\end{equation}%
are satisfied, then 
\begin{equation*}
mn\sum_{j=m}^{\infty }\sum_{k=n}^{\infty }|\Delta _{22}c_{jk}|\rightarrow
0\qquad \text{as}\qquad m+n\rightarrow \infty \qquad m,n\geq \lambda \mathbf{%
.}
\end{equation*}%
\label{L.1}
\end{lem}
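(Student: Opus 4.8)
The plan is to reduce the tail sum $mn\sum_{j=m}^{\infty}\sum_{k=n}^{\infty}|\Delta _{22}c_{jk}|$ to a geometric series of blocks of exactly the shape appearing on the left of (\ref{3.1}), to estimate each block by (\ref{3.1}), and then to control the resulting supremum factor by means of (\ref{2.2}).

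First I would use the exact dyadic decompositions $[m,\infty)\cap\mathbb{N}=\bigcup_{p\ge 0}[2^{p}m,\,2^{p+1}m-1]$ and $[n,\infty)\cap\mathbb{N}=\bigcup_{q\ge 0}[2^{q}n,\,2^{q+1}n-1]$ to write, for $m,n\ge\lambda$,
\[
\sum_{j=m}^{\infty}\sum_{k=n}^{\infty}|\Delta _{22}c_{jk}|
=\sum_{p=0}^{\infty}\sum_{q=0}^{\infty}\ \sum_{j=2^{p}m}^{2^{p+1}m-1}\ \sum_{k=2^{q}n}^{2^{q+1}n-1}|\Delta _{22}c_{jk}|.
\]
Since $2^{p}m\ge m\ge\lambda$ and $2^{q}n\ge n\ge\lambda$ for every $p,q\ge 0$, inequality (\ref{3.1}) applies to each inner double sum with $m,n$ replaced by $2^{p}m,2^{q}n$, giving
\[
\sum_{j=2^{p}m}^{2^{p+1}m-1}\ \sum_{k=2^{q}n}^{2^{q+1}n-1}|\Delta _{22}c_{jk}|
\le\frac{\mathcal{C}}{2^{p+q}\,mn}\ \sup_{M+N\ge b(2^{p}m+2^{q}n)}\ \sum_{j=M}^{2M}\sum_{k=N}^{2N}|c_{jk}|.
\]
Multiplying by $mn$ and summing over $p,q\ge 0$ (using $\sum_{p,q\ge 0}2^{-(p+q)}=4$), it then remains only to show that the supremum on the right tends to $0$ as $m+n\to\infty$, uniformly in $p,q\ge 0$.

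For that I would bring in (\ref{2.2}). Given $\varepsilon>0$, choose $L$ with $jk|c_{jk}|<\varepsilon$ whenever $j+k\ge L$. Then for any $M,N\ge 1$ with $M+N\ge L$ we have $j+k\ge M+N\ge L$ on the whole block $[M,2M]\times[N,2N]$, so
\[
\sum_{j=M}^{2M}\sum_{k=N}^{2N}|c_{jk}|\le\varepsilon\Bigl(\sum_{j=M}^{2M}\tfrac1j\Bigr)\Bigl(\sum_{k=N}^{2N}\tfrac1k\Bigr)\le 4\varepsilon,
\]
since $\sum_{j=M}^{2M}j^{-1}\le(M+1)/M\le 2$. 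Because $b(l)\to\infty$, pick $N_{\varepsilon}$ with $b(l)\ge L$ for all $l\ge N_{\varepsilon}$; then whenever $m+n\ge N_{\varepsilon}$ and $p,q\ge 0$ we have $2^{p}m+2^{q}n\ge m+n\ge N_{\varepsilon}$, hence $b(2^{p}m+2^{q}n)\ge L$, and so the displayed supremum is $\le 4\varepsilon$. Combining the estimates yields $mn\sum_{j=m}^{\infty}\sum_{k=n}^{\infty}|\Delta _{22}c_{jk}|\le 16\,\mathcal{C}\,\varepsilon$ for all $m,n\ge\lambda$ with $m+n\ge N_{\varepsilon}$, and since $\varepsilon>0$ is arbitrary the lemma follows.

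None of the steps is deep; the points that need care are essentially bookkeeping: (a) making the dyadic tiling \emph{exact}, so that the double series over $j\ge m$, $k\ge n$ is reassembled with neither gaps nor overlaps; (b) observing that the shifted indices $2^{p}m$, $2^{q}n$ stay $\ge\lambda$ for all $p,q\ge 0$, which is precisely why the restriction $m,n\ge\lambda$ is needed when invoking (\ref{3.1}); and (c) handling the $(p,q)$-dependent threshold $b(2^{p}m+2^{q}n)$ inside the supremum — this is harmless because $2^{p}m+2^{q}n\ge m+n$, so a single $N_{\varepsilon}$ (coming from $b(l)\to\infty$) controls all blocks at once. I do not anticipate a genuine obstacle; the only mildly delicate moment is making the uniform-in-$(p,q)$ smallness of the supremum factor precise, exactly as carried out above.
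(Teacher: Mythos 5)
Your proposal is correct and follows essentially the same route as the paper: the exact dyadic tiling of the tail, blockwise application of (\ref{3.1}) with $2^{p}m,2^{q}n\ge\lambda$, the bound $\sum_{j=M}^{2M}\sum_{k=N}^{2N}(jk)^{-1}\le 4$ combined with (\ref{2.2}) to make the supremum factor small, and the geometric summation giving the constant $16\mathcal{C}\epsilon$. The only cosmetic difference is that you isolate the smallness of the supremum via a separate threshold $N_{\varepsilon}$ (using only $b(l)\to\infty$), whereas the paper inserts $jk|c_{jk}|<\epsilon$ directly inside the supremum using the monotonicity of $\{b(l)\}$.
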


\begin{proof}
Set $\epsilon>0$ arbitrarily. By condition (\ref{2.2}) and from the fact that $\{b(l)\}$ tends monotonically to infinity, there exists an $m_1=m_1(\epsilon)$ such that
\begin{align}
jk|c_{jk}|<\epsilon\qquad\textrm{for all}\quad j,k\qquad j+k>b(m_1).\nonumber
\end{align}
Then, by (\ref{3.1}), assuming $m+n>m_1$ and $m,n\ge\lambda$, we have
\begin{align}
\sum_{j=m}^\infty\sum_{k=n}^\infty|\Delta_{22}c_{jk}|&=\sum_{r=0}^\infty\sum_{s=0}^\infty\sum_{j=2^rm}^{2^{r+1}m-1}\sum_{k=2^sn}^{2^{s+1}n-1}|\Delta_{22}c_{jk}|\le\sum_{r=0}^\infty\sum_{s=0}^\infty\frac{\mathcal{C}}{2^rm 2^sn}\left(\sup_{M+N\ge b(2^rm+2^sn)}\sum_{j=M}^{2M}\sum_{k=N}^{2N}|c_{jk}|\right)\nonumber\\
&<\sum_{r=0}^\infty\sum_{s=0}^\infty\frac{\mathcal{C}\epsilon}{2^rm 2^sn}\left(\sup_{M+N\ge b(2^rm+2^sn)}\sum_{j=M}^{2M}\sum_{k=N}^{2N}\frac{1}{jk}\right)\le\sum_{r=0}^\infty\sum_{s=0}^\infty\frac{4\mathcal{C}\epsilon}{2^r m 2^s n}\le\frac{16\mathcal{C}\epsilon}{mn},\label{3.2}
\end{align}
since
\begin{align*}
\sum_{j=M}^{2M}\sum_{k=N}^{2N}\frac{1}{jk}\le\sum_{j=M}^{2M}\sum_{k=N}^{2N}\frac{1}{MN}\le\frac{2M\cdot 2N}{MN}=4\qquad\textrm{for any}\qquad M,N\textbf{.}
\end{align*}
This complete the proof.
\end{proof}


\begin{lem}
Let $\{c_{jk}\}_{j,k=1}^{\infty}\subset\mathbb{C}$ and the condition (\ref%
{2.2}) is satisfied.

\begin{description}
\item[(i)] If the inequality 
\begin{equation}
\sum_{j=m}^{2m-1}{|\Delta _{20}c_{jn}|}\leq \frac{\mathcal{C}}{m}\left(
\sup_{M\geq b(m)}\sum_{j=M}^{2M}|{c_{jn}}|\right) ,\ m\geq \lambda ,\ n\geq 1
\label{3.3}
\end{equation}%
holds, then 
\begin{equation}
m\sup_{k\geq n}k\sum_{j=m}^{\infty }|\Delta _{20}c_{jk}|\rightarrow 0\mathbf{%
,}  \label{3.4}
\end{equation}

\item[(ii)] and if the inequality 
\begin{equation}
\sum_{k=n}^{2n-1}{|\Delta _{02}c_{km}|}\leq \frac{\mathcal{C}}{n}\left(
\sup_{N\geq b(n)}\sum_{k=N}^{2N}|{c_{km}}|\right) ,\ n\geq \lambda ,\ m\geq 1
\label{3.5}
\end{equation}%
holds, then 
\begin{equation}
n\sup_{j\geq m}j\sum_{k=n}^{\infty }|\Delta _{02}c_{jk}|\rightarrow 0
\label{3.6}
\end{equation}
\end{description}

$\text{as}\; m+n\rightarrow\infty,\; \text{where}\; m\ge\lambda$ and $n\ge1$
or $n\ge\lambda$ and $m\ge1$, respectively. \label{L.2}
\end{lem}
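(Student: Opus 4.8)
The plan is to prove part (i) directly, the proof of part (ii) being symmetric by interchanging the roles of the two indices. So fix $n\ge 1$, let $m\ge\lambda$, and suppose $m+n\to\infty$; since $n$ may stay bounded while $m\to\infty$, the crucial feature to exploit is that (\ref{2.2}) forces $jkc_{jk}\to0$ whenever $j+k\to\infty$, in particular whenever $j\to\infty$ with $k\ge n$ fixed, and more: for any prescribed $n$, $\sup_{k\ge n}k|c_{jk}|\to0$ as $j\to\infty$. First I would fix $\epsilon>0$ and use (\ref{2.2}) together with $b(l)\nearrow\infty$ to produce $m_1=m_1(\epsilon)$ so that $j k|c_{jk}|<\epsilon$ for all $j,k$ with $j+k>b(m_1)$; then for $m$ with $m>m_1$ (so that every $M\ge b(m)$ satisfies $M\ge b(m_1)$ and hence $M+k>b(m_1)$) the inner supremum on the right of (\ref{3.3}) can be estimated using $|c_{jn}|\le \epsilon/(jn)$ — but here I must be careful that (\ref{3.3}) involves $|c_{jn}|$ with the \emph{fixed} second index $n$, whereas I want to bound $k\sum_j|\Delta_{20}c_{jk}|$ for the running index $k\ge n$. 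So the right statement to apply is (\ref{3.3}) with $n$ replaced by an arbitrary $k\ge n$, which is legitimate since (\ref{3.3}) holds for all $n\ge1$.

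The main computation is then the dyadic decomposition, exactly parallel to Lemma \ref{L.1}: for fixed $k\ge n$ and $m\ge\lambda$ with $m+n>m_1$, write
\begin{equation*}
\sum_{j=m}^{\infty}|\Delta_{20}c_{jk}|=\sum_{r=0}^{\infty}\sum_{j=2^r m}^{2^{r+1}m-1}|\Delta_{20}c_{jk}|\le\sum_{r=0}^{\infty}\frac{\mathcal{C}}{2^r m}\left(\sup_{M\ge b(2^r m)}\sum_{j=M}^{2M}|c_{jk}|\right).
\end{equation*}
For each $r$, every $M$ in the supremum satisfies $M\ge b(2^r m)\ge b(m)\ge b(m_1)$, so $M+k>b(m_1)$ and thus $|c_{jk}|<\epsilon/(jk)\le\epsilon/(Mk)$ for $M\le j\le 2M$; hence $\sum_{j=M}^{2M}|c_{jk}|<\epsilon(M+1)/(Mk)\le 2\epsilon/k$. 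Substituting gives $\sum_{j=m}^{\infty}|\Delta_{20}c_{jk}|\le\sum_{r=0}^{\infty}\frac{\mathcal{C}}{2^r m}\cdot\frac{2\epsilon}{k}=\frac{4\mathcal{C}\epsilon}{mk}$, so $k\sum_{j=m}^{\infty}|\Delta_{20}c_{jk}|\le 4\mathcal{C}\epsilon/m$. Taking the supremum over $k\ge n$ (the bound is uniform in $k$) and multiplying by $m$ yields $m\sup_{k\ge n}k\sum_{j=m}^{\infty}|\Delta_{20}c_{jk}|\le 4\mathcal{C}\epsilon$, which is the desired conclusion once we note that $m+n>m_1$ is guaranteed for all sufficiently large $m+n$ with $m\ge\lambda$.

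I expect the only genuine subtlety — the step I'd flag as the main obstacle — is the bookkeeping around \emph{which} index the threshold $b(\cdot)$ and the smallness estimate are keyed to. We need $m>m_1$ (equivalently $m$ large), not merely $m+n>m_1$, for the inner sum over $j$ to be controlled; but since $n\ge1$ is arbitrary and the target is a limit as $m+n\to\infty$, in the regime where $m$ stays bounded there are only finitely many $m$ and for each such $m$ the bound $m\sup_{k\ge n}k\sum_{j\ge m}|\Delta_{20}c_{jk}|$ still tends to $0$ as $n\to\infty$, because $\sup_{k\ge n}k|c_{jk}|\to0$ and the series $\sum_{j\ge m}|\Delta_{20}c_{jk}|$ is controlled via (\ref{3.3}) with second index $k\ge n$ large. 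Making this two-regime argument (either $m$ large, handled by the dyadic estimate above, or $m$ bounded and $n$ large, handled by the same estimate with the roles of the smallness thresholds adjusted) clean is the part requiring care; everything else is the routine dyadic summation already rehearsed in the proof of Lemma \ref{L.1}. Part (ii) follows verbatim with $(j,m)\leftrightarrow(k,n)$ and $\Delta_{20}\leftrightarrow\Delta_{02}$.
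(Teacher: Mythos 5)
Your argument is correct and is essentially the paper's own proof: the same dyadic decomposition $\sum_{j\ge m}|\Delta_{20}c_{jk}|=\sum_r\sum_{j=2^rm}^{2^{r+1}m-1}|\Delta_{20}c_{jk}|$, the same use of (\ref{2.2}) through $jk|c_{jk}|<\epsilon$ for $j+k>b(m_1)$, and the same bound $\sum_{j=M}^{2M}1/j\le 2$, yielding $m\sup_{k\ge n}k\sum_{j\ge m}|\Delta_{20}c_{jk}|\le 4\mathcal{C}\epsilon$. Your extra discussion of the regime where $m$ stays bounded and $n\to\infty$ is sound (for $k\ge n>b(m_1)$ every term satisfies $j+k>b(m_1)$, so the same estimate applies) and is in fact more careful than the paper, which simply assumes $m>\max\{m_1,\lambda\}$ and $n>m_1$.
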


\begin{proof} Set $\epsilon>0$ arbitrarily. By condition (\ref{2.2}) and from the fact that $\{b(l)\}$ tends monotonically to infinity, there exists an $m_1=m_1(\epsilon)$ such that
\begin{align}
jk|c_{jk}|<\epsilon\qquad\textrm{for all}\quad j,k\qquad j+k>b(m_1).\nonumber
\end{align}
\begin{description}
\item[Part (i)] By (\ref{3.3}), assuming $m>\max \left\{ m_{1},\lambda \right\}$ and $n>m_1$, we have
\begin{align*}
\sup_{k\ge n}k\sum_{j=m}^\infty|\Delta_{20}c_{jk}|&=\sup_{k\ge n}k\sum_{r=0}^\infty\sum_{j=2^r m}^{2^{r+1}m-1}|\Delta_{20}c_{jk}|\le\sup_{k\ge n}k\sum_{r=0}^\infty\frac{\mathcal{C}}{2^r m}\left(\sup_{M\ge b(2^r m)}\sum_{j=M}^{2M}|c_{jk}|\right)\\
&\le\sup_{k\ge n}k\frac{\mathcal{C}}{m}\sum_{r=0}^\infty\frac{1}{2^r}\left(\sup_{M\ge b(2^r m)}\sum_{j=M}^{2M}jk|c_{jk}|\frac{1}{jk}\right)<\frac{\mathcal{C}\epsilon}{m}\sum_{r=0}^{\infty}\frac{1}{2^r}\left(\sup_{M\ge b(2^r m)}\sum_{j=M}^{2M}\frac{1}{j}\right)\le\frac{4\mathcal{C}\epsilon}{m},
\end{align*} 
since
\begin{align*}
\sum_{j=M}^{2M}\frac{1}{j}\le\frac{M+1}{M}\le 2.
\end{align*}
This implies that (\ref{3.4}) holds.

\item[Part (ii)] Using (\ref{3.5}) and assuming $n>\max \left\{ m_{1},\lambda \right\}$ and $m>m_1$, we get
\begin{align*}
\sup_{m\ge j}j\sum_{k=n}^\infty|\Delta_{02}c_{jk}|&=\sup_{m\ge j}j\sum_{r=0}^\infty\sum_{k=2^r n}^{2^{r+1}n-1}|\Delta_{02}c_{jk}|\le\sup_{m\ge j}j\sum_{r=0}^\infty\frac{\mathcal{C}}{2^r n}\left(\sup_{N\ge b(2^r n)}\sum_{k=N}^{2N}|c_{jk}|\right)\\
&\le\sup_{m\ge j}j\frac{\mathcal{C}}{n}\sum_{r=0}^\infty\frac{1}{2^r}\left(\sup_{N\ge b(2^r n)}\sum_{k=N}^{2N}jk|c_{jk}|\frac{1}{jk}\right)<\frac{\mathcal{C}\epsilon}{n}\sum_{r=0}^{\infty}\frac{1}{2^r}\left(\sup_{N\ge b(2^r n)}\sum_{k=N}^{2N}\frac{1}{k}\right)\le\frac{4\mathcal{C}\epsilon}{n}.
\end{align*}
Hence (\ref{3.6}) is satisfied.

\end{description}
Now, our proof is complete.
\end{proof}


\begin{lem}
If $\{c_{jk}\}_{j,k=1}^\infty$ is a non-negative sequence belonging to the
class $DGM(_2\alpha,_2\beta,_2\gamma,2)$ with $\mathcal{C}$, $\lambda$ and $%
\{b_1(l)\}_{l=1}^{\infty}$, $\{b_2(l)\}_{l=1}^{\infty}$, $%
\{b_3(l)\}_{l=1}^{\infty}$ then for any $m,n\ge\lambda$ 
\begin{align*}
mnc_{mn}&\le\mathcal{C}\left(\sup_{{M+N}\ge b_3(m+n)}
\sum_{j=M}^{2M}\sum_{k=N}^{2N}c_{jk}\right)+2\mathcal{C}\sum_{j=b_1(m)}^{2%
\lambda b_1(m)}\sum_{k=n}^{2n+1}c_{jk}+2\mathcal{C}\sum_{j=m}^{2m+1}%
\sum_{k=b_2(n)}^{2\lambda
b_2(n)}c_{jk}+8\sum_{j=m}^{2m+1}\sum_{k=n}^{2n+1}c_{jk}\mathbf{.}
\end{align*}
\label{L.3}
\end{lem}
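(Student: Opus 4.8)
The plan is to combine a two‑dimensional telescoping identity with an averaging argument, after first recording a one‑dimensional version of the estimate that can then be inserted coordinatewise. Throughout, non‑negativity of $\{c_{jk}\}$ is used repeatedly (to drop negative terms, to pass from $|\Delta|$–sums to sums of the $c_{jk}$, and to enlarge sums by adding positive terms).

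\emph{Step 1 (one‑dimensional estimates).} Fix $n\ge 1$ and $m\ge\lambda$. For $j=m+2p$ with $m\le j\le 2m+1$ the identity $c_{mn}=c_{jn}+\sum_{i=0}^{p-1}\Delta _{20}c_{m+2i,n}$ together with $c_{jn}\ge 0$ and the inclusion $\{m+2i:0\le i\le p-1\}\subseteq\{m,\dots,2m-1\}$ gives $c_{mn}\le c_{jn}+\sum_{j'=m}^{2m-1}|\Delta _{20}c_{j'n}|$; by the defining inequality of $DGM(_2\alpha,_2\beta,_2\gamma,2)$ in the first variable the last sum is $\le\frac{\mathcal C}{m}\max_{b_1(m)\le M\le\lambda b_1(m)}\sum_{j'=M}^{2M}c_{j'n}\le\frac{\mathcal C}{m}\sum_{j'=b_1(m)}^{2\lambda b_1(m)}c_{j'n}$. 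Averaging over the admissible $j$ — there are at least $m/2$ of them and all lie in $[m,2m+1]$ — yields
\[
m\,c_{mn}\le 2\sum_{j=m}^{2m+1}c_{jn}+\mathcal C\sum_{j=b_1(m)}^{2\lambda b_1(m)}c_{jn},\qquad m\ge\lambda,\ n\ge 1,
\]
and, symmetrically from the second‑variable condition, $\ n\,c_{mn}\le 2\sum_{k=n}^{2n+1}c_{mk}+\mathcal C\sum_{k=b_2(n)}^{2\lambda b_2(n)}c_{mk}$ for $n\ge\lambda$, $m\ge 1$.

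\emph{Step 2 (two‑dimensional telescoping and averaging).} For $m,n\ge\lambda$, $j=m+2p\in[m,2m+1]$ and $k=n+2q\in[n,2n+1]$ the mixed telescoping identity reads $c_{mn}=c_{mk}+c_{jn}-c_{jk}+\sum_{i=0}^{p-1}\sum_{l=0}^{q-1}\Delta _{22}c_{m+2i,n+2l}$. Dropping $c_{jk}\ge 0$ and enlarging the double sum to the full block $[m,2m-1]\times[n,2n-1]$, then applying the mixed‑difference condition of the class, gives $c_{mn}\le c_{mk}+c_{jn}+\frac{\mathcal C}{mn}S$, where $S:=\sup_{M+N\ge b_3(m+n)}\sum_{j'=M}^{2M}\sum_{k'=N}^{2N}c_{j'k'}$. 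Since $c_{mk}$ depends only on $k$ and $c_{jn}$ only on $j$, averaging over the admissible $j$ and $k$ (at least $m/2$ and $n/2$ of them, in $[m,2m+1]$ and $[n,2n+1]$) produces
\[
mn\,c_{mn}\le 2m\sum_{k=n}^{2n+1}c_{mk}+2n\sum_{j=m}^{2m+1}c_{jn}+\mathcal C\,S.
\]

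\emph{Step 3 (assembling).} Apply the first estimate of Step 1 to each $c_{mk}$, $k\in[n,2n+1]$, to bound $2m\sum_{k=n}^{2n+1}c_{mk}$ by $2\mathcal C\sum_{j=b_1(m)}^{2\lambda b_1(m)}\sum_{k=n}^{2n+1}c_{jk}+4\sum_{j=m}^{2m+1}\sum_{k=n}^{2n+1}c_{jk}$, and the second estimate to each $c_{jn}$, $j\in[m,2m+1]$, to bound $2n\sum_{j=m}^{2m+1}c_{jn}$ by $2\mathcal C\sum_{j=m}^{2m+1}\sum_{k=b_2(n)}^{2\lambda b_2(n)}c_{jk}+4\sum_{j=m}^{2m+1}\sum_{k=n}^{2n+1}c_{jk}$. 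Inserting these into the display of Step 2 gives exactly the claimed inequality, the coefficient $8$ of the near‑diagonal block being $4+4$. The one genuinely delicate point is the averaging in Step 2: a single‑index telescoping would leave $c_{mk}$ and $c_{jn}$ controlled only for one large $k\sim 2n$, $j\sim 2m$, which would force the arguments of $b_1,b_2$ in Step 3 to be comparable to $2m$ and $2n$ rather than $m$ and $n$; averaging keeps these terms with a bounded multiplier, so that Step 1 can be invoked with the correct indices. Everything else — the bookkeeping with parity classes (each difference advances by $2$, so $j$ and $k$ range over $m,m+2,\dots$ and $n,n+2,\dots$, meeting $[m,2m+1]$ and $[n,2n+1]$ in at least $m/2$ and $n/2$ points) and the tracking of constants — is routine.
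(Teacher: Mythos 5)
Your proposal is correct and follows essentially the same route as the paper: a step-two telescoping identity in each variable and a mixed $\Delta_{22}$ telescoping identity, the defining inequalities of $DGM(_2\alpha,_2\beta,_2\gamma,2)$ to absorb the difference sums, and an averaging over roughly $mn$ choices of the free endpoint indices, yielding the same constants $2\mathcal{C}$, $2\mathcal{C}$, $8$. The only difference is bookkeeping: you telescope along parity classes $m,m+2,\dots$ (single boundary terms, about $m/2$ and $n/2$ admissible endpoints), whereas the paper sums its inequality (3.9) over all $\mu\in(m,2m]$, $v\in(n,2n]$ with paired boundary terms $c_{\mu v}+c_{\mu+1,v}$, etc.; the two versions are equivalent.
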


\begin{proof}
Let $m,n\geq \lambda $. If $\{c_{jk}\}_{j,k=1}^{\infty }\in DGM(_{2}\alpha
,_{2}\beta ,_{2}\gamma ,2)$, then for any $v$ and $m\le\mu\le 2m$
\begin{align}
c_{mv}&=\sum_{j=m}^{\mu-1}\Delta_{20}c_{jv}+c_{\mu v}+c_{\mu+1,v}-c_{m+1,v}\le
\sum_{j=m}^{2m-1}|\Delta_{20}c_{jv}|+c_{\mu v}+c_{\mu+1,v}\nonumber\\
&\le \frac{\mathcal{C}}{m}\left(\max_{b_1(m)\le M\le\lambda b_1(m)}\sum_{j=M}^{2M}c_{jv}\right)+c_{\mu v}+c_{\mu+1,v}\textbf{.}\label{3.7}
\end{align}
By an analogous argument, we get that for any $\mu$ and $n\le v\le 2n$
\begin{align}
c_{\mu n}&=\sum_{k=n}^{v-1}\Delta_{02}c_{\mu k}+c_{\mu v}+c_{\mu,v+1}-c_{\mu,n+1}\le\sum_{k=n}^{2n-1}|\Delta_{02}c_{\mu k}|+c_{\mu v}+c_{\mu,v+1}\nonumber\\
&\le\frac{\mathcal{C}}{n}\left(\max_{b_2(n)\le N\le\lambda b_2(n)}\sum_{k=N}^{2N}c_{\mu k}\right)+c_{\mu v}+c_{\mu,v+1}\textbf{.}\label{3.8}
\end{align}
For $\mu$, $v$ such that $m\le\mu\le 2m$ and $n\le v\le 2n$ we have
\begin{align*}
\sum_{j=m}^{\mu-1}\sum_{k=n}^{v-1}\Delta_{22}c_{jk}&=\sum_{j=m}^{\mu-1}\sum_{k=n}^{v-1}\left(c_{jk}-c_{j+2,k}-\left(c_{j,k+2}-c_{j+2,k+2}\right)\right)\\
&=\sum_{k=n}^{v-1}\left(c_{mk}-c_{\mu k}-c_{\mu+1,k}+c_{m+1,k}-c_{m,k+2}+c_{\mu,k+2}+c_{\mu+1,k+2}-c_{m+1,k+2}  \right)\nonumber\\
&=\sum_{k=n}^{v-1}\left(\left(\left(c_{mk}-c_{m,k+2}\right)+\left(c_{m+1,k}-c_{m+1,k+2}\right)\right)-\left(\left(c_{\mu k}-c_{\mu,k+2}\right)+\left(c_{\mu+1,k}-c_{\mu+1,k+2}\right)\right)\right)\nonumber\\
&=c_{mn}-c_{mv}-c_{m,v+1}+c_{m,n+1}+c_{m+1,n}-c_{m+1,v}-c_{m+1,v+1}+c_{m+1,n+1}-c_{\mu n}\nonumber\\
&+c_{\mu v}+c_{\mu,v+1}-c_{\mu,n+1}-c_{\mu+1,n}+c_{\mu+1,v}+c_{\mu+1,v+1}-c_{\mu+1,n+1}\nonumber
\end{align*}
and applying the inequality (\ref{3.1}), we get
\begin{align}
c_{mn}&=\sum_{j=m}^{\mu-1}\sum_{k=n}^{v-1}\Delta_{22}c_{jk}+c_{mv}+c_{m,v+1}-c_{m,n+1}-c_{m+1,n}+c_{m+1,v}+c_{m+1,v+1}-c_{m+1,n+1}\nonumber\\
&+c_{\mu n}-c_{\mu v}-c_{\mu,v+1}+c_{\mu,n+1}+c_{\mu+1,n}-c_{\mu+1,v}-c_{\mu+1,v+1}+c_{\mu+1,n+1}\nonumber\\
&\le\sum_{j=m}^{2m-1}\sum_{k=n}^{2n-1}|\Delta_{22}c_{jk}|+c_{mv}+c_{m,v+1}+c_{m+1,v}+c_{m+1,v+1}+c_{\mu n}+c_{\mu,n+1}+c_{\mu+1,n}+c_{\mu+1,n+1}\nonumber\\
&\le\frac{\mathcal{C}}{mn}\left(\sup_{{M+N}\ge b_3(m+n)} \sum_{j=M}^{2M}\sum_{k=N}^{2N}c_{jk}\right)\nonumber\\
&+c_{mv}+c_{m,v+1}+c_{m+1,v}+c_{m+1,v+1}+c_{\mu n}+c_{\mu,n+1}+c_{\mu+1,n}+c_{\mu+1,n+1}\textbf{.}
\label{3.9}
\end{align}
Adding up all inequalities in (\ref{3.9}) for $\mu=m+1,m+2,\ldots,2m$ and $v=n+1,n+2,\ldots,2n$ we obtain
\begin{align*}
\sum_{\mu=m+1}^{2m}\sum_{v=n+1}^{2n}c_{mn}&\le \sum_{\mu=m+1}^{2m}\sum_{v=n+1}^{2n}\frac{\mathcal{C}}{mn}\left(\sup_{{M+N}\ge b_3(m+n)} \sum_{j=M}^{2M}\sum_{k=N}^{2N}c_{jk}\right)\\
&+\sum_{\mu=m+1}^{2m}\sum_{v=n+1}^{2n}c_{mv}+\sum_{\mu=m+1}^{2m}\sum_{v=n+1}^{2n}c_{m,v+1}+\sum_{\mu=m+1}^{2m}\sum_{v=n+1}^{2n}c_{m+1,v}+\sum_{\mu=m+1}^{2m}\sum_{v=n+1}^{2n}c_{m+1,v+1}\\
&+\sum_{v=n+1}^{2n}\sum_{\mu=m+1}^{2m}c_{\mu n}+\sum_{v=n+1}^{2n}\sum_{\mu=m+1}^{2m}c_{\mu,n+1}+\sum_{v=n+1}^{2n}\sum_{\mu=m+1}^{2m}c_{\mu+1,n}+\sum_{v=n+1}^{2n}\sum_{\mu=m+1}^{2m}c_{\mu+1,n+1}
\end{align*}
\begin{align*}
&=\mathcal{C}\left(\sup_{{M+N}\ge b_3(m+n)} \sum_{j=M}^{2M}\sum_{k=N}^{2N}c_{jk}\right)+\sum_{\mu=m+1}^{2m}\sum_{v=n+1}^{2n}(c_{mv}+c_{m+1,v})+\sum_{\mu=m+1}^{2m}\sum_{v=n+1}^{2n}(c_{m,v+1}+c_{m+1,v+1})\\
&+\sum_{v=n+1}^{2n}\sum_{\mu=m+1}^{2m}(c_{\mu n}+c_{\mu,n+1})+\sum_{v=n+1}^{2n}\sum_{\mu=m+1}^{2m}(c_{\mu+1,n}+c_{\mu+1,n+1}).
\end{align*}
Analogously as in (\ref{3.7}) and (\ref{3.8}) we can obtain the following inequalities
\begin{align*}
c_{mv}+c_{m+1,v}&\le \frac{\mathcal{C}}{m}\left(\max_{b_1(m)\le M\le\lambda b_1(m)}\sum_{j=M}^{2M}c_{jv}\right)+c_{\mu v}+c_{\mu+1,v}\qquad\textrm{for any}\; v\in\mathbb{N}\;\textrm{and}\;\mu=m,\cdots,2m,
\end{align*}
\begin{align*}
c_{\mu n}+c_{\mu,n+1}&\le\frac{\mathcal{C}}{n}\left(\max_{b_2(n)\le N\le\lambda b_2(n)}\sum_{k=N}^{2N}c_{\mu k}\right)+c_{\mu v}+c_{\mu,v+1}\qquad\textrm{for any}\; \mu\in\mathbb{N}\;\textrm{and}\;v=n,\cdots,2n.
\end{align*}
Hence we get
\begin{align*}
mnc_{mn}&\le
\mathcal{C}\left(\sup_{{M+N}\ge b_3(m+n)} \sum_{j=M}^{2M}\sum_{k=N}^{2N}c_{jk}\right)\\
&+\sum_{v=n+1}^{2n}\left(\sum_{\mu=m+1}^{2m}\frac{\mathcal{C}}{m}\left(\max_{b_1(m)\le M\le\lambda b_1(m)}\sum_{j=M}^{2M}c_{jv}\right)+\sum_{\mu=m+1}^{2m}(c_{\mu v}+c_{\mu+1,v})\right)\\
&+\sum_{v=n+1}^{2n}\left(\sum_{\mu=m+1}^{2m}\frac{\mathcal{C}}{m}\left(\max_{b_1(m)\le M\le\lambda b_1(m)}\sum_{j=M}^{2M}c_{j,v+1}\right)+\sum_{\mu=m+1}^{2m}(c_{\mu, v+1}+c_{\mu+1,v+1})\right)\\
&+\sum_{\mu=m+1}^{2m}\left(\sum_{v=n+1}^{2n}\frac{\mathcal{C}}{n}\left(\max_{b_2(n)\le N\le\lambda b_2(n)}\sum_{k=N}^{2N}c_{\mu k}\right)+\sum_{v=n+1}^{2n}(c_{\mu v}+c_{\mu,v+1})\right)\\
&+\sum_{\mu=m+1}^{2m}\left(\sum_{v=n+1}^{2n}\frac{\mathcal{C}}{n}\left(\max_{b_2(n)\le N\le\lambda b_2(n)}\sum_{k=N}^{2N}c_{\mu+1, k}\right)+\sum_{v=n+1}^{2n}(c_{\mu+1, v}+c_{\mu+1,v+1})\right)\\
&\le\mathcal{C}\left(\sup_{{M+N}\ge b_3(m+n)} \sum_{j=M}^{2M}\sum_{k=N}^{2N}c_{jk}\right)+\mathcal{C}\sum_{j=b_1(m)}^{2\lambda b_1(m)}\sum_{k=n}^{2n}c_{jk}+\sum_{j=m}^{2m}\sum_{k=n}^{2n}c_{jk}+\sum_{j=m}^{2m}\sum_{k=n}^{2n}c_{j+1,k}\\
&+\mathcal{C}\sum_{j=b_1(m)}^{2\lambda b_1(m)}\sum_{k=n}^{2n}c_{j,k+1}+\sum_{j=m}^{2m}\sum_{k=n}^{2n}c_{j,k+1}+\sum_{j=m}^{2m}\sum_{k=n}^{2n}c_{j+1,k+1}+\\
&+\mathcal{C}\sum_{j=m}^{2m}\sum_{k=b_2(n)}^{2\lambda b_2(n)}c_{jk}+\sum_{j=m}^{2m}\sum_{k=n}^{2n}c_{jk}+\sum_{j=m}^{2m}\sum_{k=n}^{2n}c_{j,k+1}\\
&+\mathcal{C}\sum_{j=m}^{2m}\sum_{k=b_2(n)}^{2\lambda b_2(n)}c_{j+1,k}+\sum_{j=m}^{2m}\sum_{k=n}^{2n}c_{j+1,k}+\sum_{j=m}^{2m}\sum_{k=n}^{2n}c_{j+1,k+1}\\
&\le\mathcal{C}\left(\sup_{{M+N}\ge b_3(m+n)} \sum_{j=M}^{2M}\sum_{k=N}^{2N}c_{jk}\right)+2\mathcal{C}\sum_{j=b_1(m)}^{2\lambda b_1(m)}\sum_{k=n}^{2n+1}c_{jk}+2\mathcal{C}\sum_{j=m}^{2m+1}\sum_{k=b_2(n)}^{2\lambda b_2(n)}c_{jk}+8\sum_{j=m}^{2m+1}\sum_{k=n}^{2n+1}c_{jk}\textbf{.}
\end{align*}
This ends the proof.
\end{proof} 

Denote, for $r\in\mathbb{N}$ and $k=0,1,2,\ldots,$ by 
\begin{align*}
\widetilde D_{k,r}(x)=\frac{\cos(k+\frac{r}{2})x}{2\sin(\frac{r}{2}x)}
\end{align*}
the conjugated Dirichlet type kernel.

\begin{lem}
(\cite{S}, \cite{2S}) Let $r\in \mathbb{N},\;l\in \mathbb{Z}$ and $%
\{a_{k}\}_{k=1}^{\infty }\subset \mathbb{C}$. If $x\neq \frac{2l\pi }{r}$,
then for all $m\geq n$ 
\begin{equation*}
\sum_{k=n}^{m}a_{k}\sin kx=-\sum_{k=n}^{m}\Delta _{r}a_{k}\widetilde{D}%
_{k,r}(x)+\sum_{k=m+1}^{m+r}a_{k}\widetilde{D}_{k,-r}(x)-%
\sum_{k=n}^{n+r-1}a_{k}\widetilde{D}_{k,-r}(x)
\end{equation*}%
\label{L.4}
\end{lem}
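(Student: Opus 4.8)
The plan is to reduce the identity to a single trigonometric observation and then carry out an Abel-type summation in which the ``step'' is $r$ rather than $1$.

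First I would record that
\begin{equation*}
\widetilde{D}_{k-r,r}(x)-\widetilde{D}_{k,r}(x)=\sin kx\qquad(k\in\mathbb{Z}),
\end{equation*}
which is legitimate precisely when $\sin(\tfrac{r}{2}x)\neq 0$, i.e. when $x\neq\tfrac{2l\pi}{r}$. This follows at once from the definition of $\widetilde{D}_{k,r}$ and the product-to-sum formula $\cos(k-\tfrac r2)x-\cos(k+\tfrac r2)x=2\sin kx\,\sin(\tfrac r2 x)$. I would also note the companion relation $\widetilde{D}_{k-r,r}(x)=-\widetilde{D}_{k,-r}(x)$ (here $\widetilde{D}_{k,-r}$ is the kernel formula evaluated at $-r$), which is clear from $\sin(-\tfrac r2 x)=-\sin(\tfrac r2 x)$; this is the device that will put the boundary terms into the exact form stated.

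Substituting the first identity into the left-hand side gives
\begin{equation*}
\sum_{k=n}^{m}a_k\sin kx=\sum_{k=n}^{m}a_k\widetilde{D}_{k-r,r}(x)-\sum_{k=n}^{m}a_k\widetilde{D}_{k,r}(x).
\end{equation*}
In the first sum I would shift the index by $r$, so that it becomes $\sum_{k=n-r}^{m-r}a_{k+r}\widetilde{D}_{k,r}(x)$, and then reorganise: on the overlapping block $k=n,\dots,m$ the two sums combine to the factor $a_{k+r}-a_k=-\Delta_r a_k$, while the non-overlapping parts contribute exactly the $r$-term blocks $k=n-r,\dots,n-1$ and $k=m-r+1,\dots,m$ of $a_{k+r}\widetilde{D}_{k,r}(x)$. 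This is the usual shifted-sum rearrangement, valid because $m\geq n$, and it yields
\begin{equation*}
\sum_{k=n}^{m}a_k\sin kx=-\sum_{k=n}^{m}\Delta_{r}a_k\,\widetilde{D}_{k,r}(x)+\sum_{k=n-r}^{n-1}a_{k+r}\widetilde{D}_{k,r}(x)-\sum_{k=m-r+1}^{m}a_{k+r}\widetilde{D}_{k,r}(x).
\end{equation*}
Finally I would reindex the two boundary sums by $k\mapsto k-r$ and apply $\widetilde{D}_{k-r,r}(x)=-\widetilde{D}_{k,-r}(x)$; the first becomes $-\sum_{k=n}^{n+r-1}a_k\widetilde{D}_{k,-r}(x)$ and the second equals $-\sum_{k=m+1}^{m+r}a_k\widetilde{D}_{k,-r}(x)$, so that (after distributing the outer minus sign on the last one) we arrive at exactly the asserted formula.

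I do not expect a genuine obstacle here. The trigonometric identity is a one-line computation, and the only step needing care is the bookkeeping of the step-$r$ Abel rearrangement: one must check that after the index shift the leftover terms are precisely the $r$-term end blocks with the correct signs, and that the hypothesis $m\geq n$ makes the manipulation of the (possibly empty or overlapping) summation ranges unambiguous, so that no case distinction between the sizes of $m$, $n$ and $r$ is required.
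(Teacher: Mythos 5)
Your proof is correct: the telescoping identity $\widetilde{D}_{k-r,r}(x)-\widetilde{D}_{k,r}(x)=\sin kx$, the relation $\widetilde{D}_{k-r,r}(x)=-\widetilde{D}_{k,-r}(x)$, and the step-$r$ Abel rearrangement reproduce exactly the stated formula, and this is the same standard argument as in the cited sources \cite{S}, \cite{2S} (the present paper quotes the lemma without proof). The only blemish is verbal: the ``overlapping block'' is $k=n,\dots,m-r$ rather than $k=n,\dots,m$, but the displayed identity you write after the rearrangement is algebraically correct for every $m\geq n$, so nothing is affected.
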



\begin{lem}
Let $\{c_{jk}\}_{j,k=1}^{\infty}\subset\mathbb{C}$ and $m,M,n,N\in\mathbb{N}$
such that $m\le M$ and $n\le N$.

\begin{description}
\item[(i)] If $x\in \left( 0,\frac{\pi }{2}\right) $, then 
\begin{equation}
\left\vert \sum_{j=m}^{M}c_{jk}\sin jx\right\vert \leq \frac{\pi }{4x}\left(
\sum_{j=m}^{M}|\Delta
_{20}c_{jk}|+\sum_{j=M+1}^{M+2}|c_{jk}|+\sum_{j=m}^{m+1}|c_{jk}|\right)
\label{3.10}
\end{equation}%
and if $x\in \left( \frac{\pi }{2},\pi \right) $, then 
\begin{equation*}
\left\vert \sum_{j=m}^{M}c_{jk}\sin jx\right\vert \leq \frac{\pi }{4(\pi -x)}%
\left( \sum_{j=m}^{M}|\Delta
_{20}c_{jk}|+\sum_{j=M+1}^{M+2}|c_{jk}|+\sum_{j=m}^{m+1}|c_{jk}|\right)
\end{equation*}%
for any $k\in \mathbb{N}$.

\item[(ii)] If $y\in \left( 0,\frac{\pi }{2}\right) $, then 
\begin{equation}
\left\vert \sum_{k=n}^{N}c_{jk}\sin ky\right\vert \leq \frac{\pi }{4y}\left(
\sum_{k=n}^{N}|\Delta
_{02}c_{jk}|+\sum_{k=N+1}^{N+2}|c_{jk}|+\sum_{k=n}^{n+1}|c_{jk}|\right)
\label{3.11}
\end{equation}%
and if $y\in \left( \frac{\pi }{2},\pi \right) $, then 
\begin{equation*}
\left\vert \sum_{k=n}^{N}c_{jk}\sin ky\right\vert \leq \frac{\pi }{4(\pi -y)}%
\left( \sum_{k=n}^{N}|\Delta
_{02}c_{jk}|+\sum_{k=N+1}^{N+2}|c_{jk}|+\sum_{k=n}^{n+1}|c_{jk}|\right)
\end{equation*}%
for any $j\in \mathbb{N}$.
\end{description}

\label{L.5}
\end{lem}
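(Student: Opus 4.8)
The plan is to obtain both parts as direct consequences of the Abel-type decomposition in Lemma~\ref{L.4} with $r=2$, together with an elementary pointwise bound on the conjugate Dirichlet kernels $\widetilde{D}_{k,\pm 2}$.

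For part (i) I would fix $k\in\mathbb{N}$ and apply Lemma~\ref{L.4} with $r=2$ to the sequence $a_j:=c_{jk}$ (with $j$ as the summation index), on the block $j=m,\dots,M$. This is legitimate because $x\in\left(0,\tfrac{\pi}{2}\right)\cup\left(\tfrac{\pi}{2},\pi\right)$ avoids all the excluded points $\tfrac{2l\pi}{2}=l\pi$, $l\in\mathbb{Z}$. Since $\Delta_{2}$ applied in the first index is exactly $\Delta_{20}$, Lemma~\ref{L.4} gives
\[
\sum_{j=m}^{M}c_{jk}\sin jx=-\sum_{j=m}^{M}\Delta_{20}c_{jk}\,\widetilde{D}_{j,2}(x)+\sum_{j=M+1}^{M+2}c_{jk}\,\widetilde{D}_{j,-2}(x)-\sum_{j=m}^{m+1}c_{jk}\,\widetilde{D}_{j,-2}(x).
\]
From the definition $\widetilde{D}_{k,r}(x)=\frac{\cos\left(k+\frac{r}{2}\right)x}{2\sin\left(\frac{r}{2}x\right)}$ we read off $\widetilde{D}_{j,2}(x)=\frac{\cos(j+1)x}{2\sin x}$ and $\widetilde{D}_{j,-2}(x)=-\frac{\cos(j-1)x}{2\sin x}$, hence $|\widetilde{D}_{j,\pm2}(x)|\le\frac{1}{2|\sin x|}$ whenever $\sin x\neq0$. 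Taking absolute values and using the triangle inequality then yields
\[
\left|\sum_{j=m}^{M}c_{jk}\sin jx\right|\le\frac{1}{2|\sin x|}\left(\sum_{j=m}^{M}|\Delta_{20}c_{jk}|+\sum_{j=M+1}^{M+2}|c_{jk}|+\sum_{j=m}^{m+1}|c_{jk}|\right).
\]

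To finish (i) I would insert the elementary estimate for $\frac{1}{2|\sin x|}$ coming from Jordan's inequality $\sin t\ge\frac{2}{\pi}t$ on $[0,\tfrac{\pi}{2}]$: if $x\in\left(0,\tfrac{\pi}{2}\right)$ then $\frac{1}{2|\sin x|}\le\frac{\pi}{4x}$, while if $x\in\left(\tfrac{\pi}{2},\pi\right)$ then $\sin x=\sin(\pi-x)\ge\frac{2}{\pi}(\pi-x)$, so $\frac{1}{2|\sin x|}\le\frac{\pi}{4(\pi-x)}$; substituting gives the two claimed inequalities. Part (ii) is proved in exactly the same way: fix $j\in\mathbb{N}$, apply Lemma~\ref{L.4} with $r=2$ to $a_k:=c_{jk}$ on the block $k=n,\dots,N$ (so that $\Delta_2$ becomes $\Delta_{02}$ and the boundary blocks are $[N+1,N+2]$ and $[n,n+1]$), use $|\widetilde{D}_{k,\pm2}(y)|\le\frac{1}{2|\sin y|}$, and then the same case analysis for $y\in\left(0,\tfrac{\pi}{2}\right)$, respectively $y\in\left(\tfrac{\pi}{2},\pi\right)$.

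There is no real obstacle here; the only things needing attention are bookkeeping — correctly translating the index ranges of Lemma~\ref{L.4} for $r=2$ so that the boundary sums are precisely $\sum_{j=M+1}^{M+2}$ and $\sum_{j=m}^{m+1}$ (and symmetrically in $k$) — and noting that the cosine factors appearing in $\widetilde{D}_{j,\pm2}$ are harmless, being bounded by $1$ in modulus.
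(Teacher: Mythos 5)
Your proposal is correct and follows essentially the same route as the paper: apply Lemma~\ref{L.4} with $r=2$ to the rows (resp.\ columns), bound the kernels by $|\widetilde{D}_{j,\pm2}(x)|\le\frac{1}{2\sin x}$, and then use $\sin x\ge\frac{2}{\pi}x$ on $\left(0,\frac{\pi}{2}\right)$ and $\sin x\ge\frac{2}{\pi}(\pi-x)$ on $\left(\frac{\pi}{2},\pi\right)$, exactly as in the paper's estimates (\ref{3.12}) and (\ref{3.13}).
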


\begin{proof}
\begin{description}
\item[Part (i).] By Lemma \ref{L.4}, we have
\begin{align*}
\left|\sum_{j=m}^{M}c_{jk}\sin jx\right|&=\left|-\sum_{j=m}^{M}\Delta_{20}c_{jk}\widetilde D_{j,2}(x)+\sum_{j=M+1}^{M+2}c_{jk}\widetilde D_{j,-2}(x)-\sum_{j=m}^{m+1}c_{jk}\widetilde D_{j,-2}(x)\right|\\
&\le\sum_{j=m}^{M}|\Delta_{20}c_{jk}|\cdot|\widetilde D_{j,2}(x)|+\sum_{j=M+1}^{M+2}|c_{jk}|\cdot|\widetilde D_{j,-2}(x)|+\sum_{j=m}^{m+1}|c_{jk}|\cdot|\widetilde D_{j,-2}(x)|\textbf{.}
\end{align*}
If $x\in(0,\frac{\pi}{2})$, then using inequality $\sin x\ge\frac{2}{\pi}x$ we obtain the following estimation:
\begin{align}
\left|\widetilde D_{j,\pm 2}(x)\right|\le\left|\frac{\cos(j\pm 1)x}{2\sin(\pm x)}\right|\le\frac{1}{2\sin x}\le\frac{1}{\frac{4}{\pi}x}\le\frac{\pi}{4x}\textbf{.}\label{3.12}
\end{align}
From this we get
\begin{align*}
\left|\sum_{j=m}^{M}c_{jk}\sin jx\right|&\le\frac{\pi}{4x}\left(\sum_{j=m}^{M}|\Delta_{20}c_{jk}|+\sum_{j=M+1}^{M+2}|c_{jk}|+\sum_{j=m}^{m+1}|c_{jk}|\right)\textbf{.}
\end{align*}
If $x\in(\frac{\pi}{2},\pi)$, then using inequality $\sin x\ge 2-\frac{2}{\pi}x$ we have the estimation:
\begin{align}
\left|\widetilde D_{j,\pm 2}(x)\right|\le\frac{1}{2(2-\frac{2}{\pi}x)}\le\frac{\pi}{4(\pi-x)}\label{3.13}
\end{align}
and consequently
\begin{align*}
\left|\sum_{j=m}^{M}c_{jk}\sin jx\right|&\le\frac{\pi}{4(\pi-x)}\left(\sum_{j=m}^{M}|\Delta_{20}c_{jk}|+\sum_{j=M+1}^{M+2}|c_{jk}|+\sum_{j=m}^{m+1}|c_{jk}|\right)\textbf{.}
\end{align*}
\item[Part (ii).] Analogously as above
\begin{align*}
\left|\sum_{k=n}^{N}c_{jk}\sin ky\right|&=\left|-\sum_{k=n}^{N}\Delta_{02}c_{jk}\widetilde D_{k,2}(y)+\sum_{k=N+1}^{N+2}c_{jk}\widetilde D_{k,-2}(y)-\sum_{k=n}^{n+1}c_{jk}\widetilde D_{k,-2}(y)\right|\\
&\le\sum_{k=n}^{N}|\Delta_{02}c_{jk}|\cdot|\widetilde D_{k,2}(y)|+\sum_{k=N+1}^{N+2}|c_{jk}|\cdot|\widetilde D_{k,-2}(y)|+\sum_{k=n}^{n+1}|c_{jk}|\cdot|\widetilde D_{k,-2}(y)|\\
&\le\frac{\pi}{4y}\left(\sum_{k=n}^{N}|\Delta_{02}c_{jk}|+\sum_{k=N+1}^{N+2}|c_{jk}|+\sum_{k=n}^{n+1}|c_{jk}|\right)
\end{align*}
for $y\in\left(0,\frac{\pi}{2}\right)$, and
\begin{align*}
\left|\sum_{k=n}^{N}c_{jk}\sin ky\right|&\le\frac{\pi}{4(\pi-y)}\left(\sum_{k=n}^{N}|\Delta_{02}c_{jk}|+\sum_{k=N+1}^{N+2}|c_{jk}|+\sum_{k=n}^{n+1}|c_{jk}|\right)
\end{align*}
for $y\in\left(\frac{\pi}{2},\pi\right)$.
\newline
\end{description}

This ends the proof.
\end{proof}

\section{Main results}

We have the following results:

\begin{twr}
\qquad

\begin{description}
\item[(i)] If a double sequence $\{c_{jk}\}_{j,k=1}^{\infty}\subset\mathbb{C}
$ belongs to $DGM(_3\alpha,_3\beta,_3\gamma,2)$ and (\ref{2.2}) holds, then
the regular convergence of double sine series (\ref{2.1}) is uniform in ($%
x,y $).

\item[(ii)] Conversely, if a double sequence $\{c_{jk}\}_{j,k=1}^{\infty}%
\subset\mathbb{R}_+$ belongs to $DGM(_2\alpha,_2\beta,_2\gamma,2)$ and
double sine series (\ref{2.1}) is uniformly regularly convergent in ($x,y$),
then (\ref{2.2}) is satisfied.
\end{description}

\label{T.7}
\end{twr}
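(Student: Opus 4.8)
For part (i), the plan is to mimic K\'orus's argument for Theorem \ref{T.5}, replacing first differences by second differences and using Lemma \ref{L.4} with $r=2$ in place of Abel summation. The task is to show that the tails
\[
\left|\sum_{j=m}^{M}\sum_{k=n}^{N}c_{jk}\sin jx\sin ky\right|
\]
can be made uniformly small in $(x,y)$ once $m+n$ is large. I would split the $(x,y)$-region into the cases where $x$ (resp.\ $y$) is ``small'' — say of order at most $1/m$ (resp.\ $1/n$) — or ``large''. When both are small one estimates $\sin jx\le jx$, $\sin ky\le ky$ crudely and invokes $jkc_{jk}\to0$ together with the counting bound $\#\{(j,k)\}\le mn\cdot(\text{const})$; when one or both are large one applies Lemma \ref{L.5} in the relevant variable(s) to pass from $c_{jk}$ to the second differences $\Delta_{20}$, $\Delta_{02}$, and in the doubly-large case one iterates to reach $\Delta_{22}$. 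The resulting sums over second differences are then controlled by the $DGM(_3\alpha,_3\beta,_3\gamma,2)$ inequalities (\ref{2.9})--(\ref{2.11}) with $r=2$, and the right-hand sides tend to zero by Lemmas \ref{L.1} and \ref{L.2}: indeed Lemma \ref{L.1} gives $mn\sum_{j\ge m}\sum_{k\ge n}|\Delta_{22}c_{jk}|\to0$, Lemma \ref{L.2}(i) gives $m\sup_{k\ge n}k\sum_{j\ge m}|\Delta_{20}c_{jk}|\to0$, and Lemma \ref{L.2}(ii) the symmetric statement. The boundary terms produced by Lemma \ref{L.4} (the $\widetilde D_{k,-2}$ terms at the endpoints) are handled together with the ``large $x$, small $y$'' type pieces, again using $jkc_{jk}\to0$. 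Summing the finitely many regional contributions gives the uniform bound.

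For part (ii) I would argue by contradiction, following the pattern of Theorem \ref{T.3}(ii) but now with the sharper tool of Lemma \ref{L.3}. Suppose $\{c_{jk}\}\subset\mathbb{R}_+$ lies in $DGM(_2\alpha,_2\beta,_2\gamma,2)$, the series (\ref{2.1}) converges regularly and uniformly, but (\ref{2.2}) fails; then there is $\varepsilon_0>0$ and a sequence $(m_i,n_i)$ with $m_i+n_i\to\infty$ and $m_in_ic_{m_in_i}\ge\varepsilon_0$. Choosing $x\asymp 1/m_i$ and $y\asymp 1/n_i$ — concretely $x=\pi/(4m_i)$, $y=\pi/(4n_i)$, say — one uses the positivity of the $c_{jk}$ and the elementary fact that $\sin jx\sin ky$ is bounded below by a positive constant on a block of indices $j\in[m_i,2m_i]$, $k\in[n_i,2n_i]$ to get
\[
\sum_{j=m_i}^{2m_i}\sum_{k=n_i}^{2n_i}c_{jk}\sin jx\sin ky\ \ge\ c'\sum_{j=m_i}^{2m_i}\sum_{k=n_i}^{2n_i}c_{jk}.
\]
Lemma \ref{L.3} says precisely that $m_in_ic_{m_in_i}$ is dominated by a bounded number of such block sums (taken over $[m_i,2m_i+1]\times[n_i,2n_i+1]$, over $[b_1(m_i),2\lambda b_1(m_i)]\times[n_i,2n_i+1]$, over $[m_i,2m_i+1]\times[b_2(n_i),2\lambda b_2(n_i)]$, and over $M+N\ge b_3(m_i+n_i)$), so at least one of these blocks carries mass $\ge c''\varepsilon_0$ for infinitely many $i$. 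On that block one again picks $x\asymp1/M$, $y\asymp1/N$ (with $M,N$ the left endpoints of the block, which tend to infinity since $b_1,b_2,b_3\to\infty$) and repeats the lower estimate, producing a Cauchy-type sum $\left|\sum_{j=M}^{2M}\sum_{k=N}^{2N}c_{jk}\sin jx\sin ky\right|\ge c'''\varepsilon_0$ with $M+N\to\infty$. This contradicts the uniform regular convergence of (\ref{2.1}), which forces such tail sums to go to zero uniformly in $(x,y)$. Hence (\ref{2.2}) must hold.

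The main obstacle I expect is bookkeeping in part (i): getting the region decomposition in $(x,y)$ right so that in each subregion the correct one of Lemmas \ref{L.1}, \ref{L.2} applies, and checking that the endpoint/boundary terms coming from the two successive applications of Lemma \ref{L.4} (once in $j$ with $r=2$, once in $k$ with $r=2$) are all absorbed — these cross terms are where the $\sup_{k\ge n}k\sum_{j\ge m}|\Delta_{20}c_{jk}|$-type quantities of Lemma \ref{L.2}, rather than Lemma \ref{L.1}, are needed. In part (ii) the only delicate point is ensuring that when Lemma \ref{L.3} shifts attention to the $b_i$-blocks the new indices still tend to infinity (so that the contradiction with uniform convergence is legitimate), which is exactly guaranteed by the hypothesis that $b_1,b_2,b_3\to\infty$.
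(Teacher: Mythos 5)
Your plan is essentially the paper's own proof: part (i) uses the same region decomposition governed by $\mu=\lceil 1/x\rceil$, $v=\lceil 1/y\rceil$ (and the analogues near $\pi$), the crude $\sin t\le t$ bound plus (\ref{2.2}) when both index ranges are short, Lemma \ref{L.5} in one variable and the iterated Lemma \ref{L.4} in both variables otherwise, with Lemmas \ref{L.1} and \ref{L.2} supplying exactly the three vanishing quantities you name; part (ii) is the contrapositive of the paper's direct argument, built on Lemma \ref{L.3} with the same test points $x=\pi/(4m)$, $\pi/(4\lambda b_1(m))$, etc., and the same appeal to the Cauchy criterion for uniform regular convergence.

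The one genuine omission is in part (ii): Lemma \ref{L.3} is only available for $m,n\ge\lambda$, so your contradiction argument breaks down if the bad sequence $(m_i,n_i)$ has, say, $n_i<\lambda$ bounded while $m_i\to\infty$ (condition (\ref{2.2}) concerns $j+k\to\infty$, which includes this strip). Your proposal never treats this case, and none of the four block sums produced by Lemma \ref{L.3} can be formed there. The paper closes it separately: the row and column series in (\ref{5.1}) converge uniformly (this follows from the uniform regular convergence of (\ref{2.1})), the fixed rows $\{c_{jk}\}_{j}$ with $k<\lambda$ are non-negative sequences in the corresponding one-dimensional class, and the one-dimensional necessity result then gives $jc_{jk}\to0$ for each such fixed $k$, hence $jkc_{jk}\to0$ on the strip. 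You should add this (or an equivalent) step; the rest of your outline, including the bookkeeping concerns you flag for part (i) and the observation that $b_1,b_2,b_3\to\infty$ keeps the shifted blocks' indices tending to infinity, matches the published argument.
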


\begin{twr}
\qquad

\begin{description}
\item[(i)] $DGM(_3\alpha,_3\beta,_3\gamma,1)\subset
DGM(_3\alpha,_3\beta,_3\gamma,2)$.

\item[(ii)] There exists a double sequence $\{c_{jk}\}_{j,k=1}^{\infty}$,
with the property (\ref{2.2}), which belongs to the class $%
DGM(_3\alpha,_3\beta,_3\gamma,2)$ but it does not belong to the class $%
DGM(_3\alpha,_3\beta,_3\gamma,1)$.
\end{description}

\label{T.8}
\end{twr}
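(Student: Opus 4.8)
The plan for part (i) is to show directly that the defining inequalities of $DGM(_3\alpha,_3\beta,_3\gamma,1)$ imply, after telescoping a single difference into a pair of consecutive ones, the corresponding inequalities for $r=2$. The key elementary identities are $\Delta_{20}c_{jk}=\Delta_{10}c_{jk}+\Delta_{10}c_{j+1,k}$, $\Delta_{02}c_{jk}=\Delta_{01}c_{jk}+\Delta_{01}c_{j,k+1}$, and $\Delta_{22}c_{jk}=\Delta_{11}c_{jk}+\Delta_{11}c_{j+1,k}+\Delta_{11}c_{j,k+1}+\Delta_{11}c_{j+1,k+1}$. So first I would write, for the row condition,
\begin{align*}
\sum_{j=m}^{2m-1}|\Delta_{20}c_{jn}|\le\sum_{j=m}^{2m-1}|\Delta_{10}c_{jn}|+\sum_{j=m+1}^{2m}|\Delta_{10}c_{jn}|\le 2\sum_{j=m}^{2m}|\Delta_{10}c_{jn}|,
\end{align*}
then split $\sum_{j=m}^{2m}=\sum_{j=m}^{2m-1}+|\Delta_{10}c_{2m,n}|$ and absorb the tail term by noting $|\Delta_{10}c_{2m,n}|\le\sum_{j=2m}^{4m-1}|\Delta_{10}c_{jn}|$ is controlled by the $r=1$ hypothesis applied at $2m$ (valid since $2m\ge\lambda$). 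Each application of the $r=1$ bound produces a $\sup$ of the form $\sup_{M\ge b(\cdot)}\sum_{j=M}^{2M}|c_{jn}|$; since $b$ is monotone increasing (for $SBVDS_2$) one has $b(2m)\ge b(m)$, so both suprema are dominated by $\sup_{M\ge b(m)}\sum_{j=M}^{2M}|c_{jn}|$, and the constant just changes from $\mathcal C$ to a fixed multiple of it. The column condition is handled symmetrically, and the mixed condition (\ref{2.11}) the same way using the four-term expansion of $\Delta_{22}$ and the monotonicity $b(m+n)\le b(2m+n),b(m+2n),b(2m+2n)$. This gives (i) with $\lambda$ unchanged and $\mathcal C$ replaced by, say, $C\mathcal C$ for an absolute constant $C$.

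For part (ii) I would exhibit a concrete double sequence that is "invisible" to $\Delta_{22}$ but not to $\Delta_{11}$. A natural candidate is a sequence that is constant along one parity class and drops on the other, e.g.\ set $c_{jk}=\tfrac{1}{jk}$ when $j$ and $k$ are both odd, and $c_{jk}=0$ otherwise — or more robustly, a separable construction $c_{jk}=a_jb_k$ where $\{a_j\}$ is the classical one-dimensional witness separating $SBVS_2$ from its $r=1$ analogue (a sequence with $a_{2i}=a_{2i+1}$ so that $\Delta_2 a$ has large total variation on dyadic blocks while $\Delta_1 a$ is small, or the reverse, depending on the direction of the known one-dimensional separation in \cite{S},\cite{K}). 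Then $\Delta_{22}c_{jk}=(\Delta_2 a_j)(\Delta_2 b_k)$ and $\Delta_{rr}$-type sums factor, so the $DGM(_3\alpha,_3\beta,_3\gamma,2)$ membership reduces to the two one-dimensional $r=2$ estimates, which hold by construction; meanwhile $\Delta_{11}c_{jk}$ involves $\Delta_1a_j$ and $\Delta_1b_k$, and one of the one-dimensional factors fails the $r=1$ bound, so (\ref{2.9})–(\ref{2.11}) cannot all hold. One must also check (\ref{2.2}): since $a_j\asymp 1/j$ and $b_k\asymp 1/k$ along the relevant indices (and $0$ elsewhere), $jkc_{jk}$ stays bounded, and with a mild extra decay factor like $1/\log$ inserted it tends to $0$; I would build that factor in from the start so (\ref{2.2}) is automatic.

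The main obstacle is part (ii): it is not enough that $\Delta_{11}$ has large variation, one must ensure that \emph{no} choice of sequences $b_1,b_2,b_3$ (tending to infinity, not necessarily monotonically, since this is the $SBVDS_2$-type class $_3\alpha,_3\beta,_3\gamma$ — actually monotone for $SBVDS_2$) can rescue inequality (\ref{2.9}) or (\ref{2.10}). The point is that the right-hand side $\frac{\mathcal C}{m}\sup_{M\ge b(m)}\sum_{j=M}^{2M}|c_{jn}|$ is, for the separable example, $\frac{\mathcal C}{m}|b_n|\sup_{M\ge b(m)}\sum_{j=M}^{2M}|a_j|\asymp \frac{\mathcal C}{m}|b_n|\cdot(\text{something bounded})$, whereas the left side is $\asymp|b_n|\sum_{j=m}^{2m-1}|\Delta_1 a_j|$; so the obstruction is exactly the one-dimensional failure $m\sum_{j=m}^{2m-1}|\Delta_1a_j|\not\le \mathcal C\sup_{M\ge b(m)}\sum_{j=M}^{2M}|a_j|$ for every admissible $b$, which is precisely the known one-dimensional separation result one is invoking. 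So the work is mostly in (a) choosing the one-dimensional building block correctly and citing its properties, (b) verifying the $\Delta_{22}$ sums genuinely factor and satisfy the $r=2$ bounds, and (c) reducing the negative statement cleanly to the one-dimensional negative statement — the rest is bookkeeping.
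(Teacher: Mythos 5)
Your proposal is correct and follows essentially the same route as the paper: part (i) via the telescoping identities $\Delta_{20}c_{jk}=\Delta_{10}c_{jk}+\Delta_{10}c_{j+1,k}$, $\Delta_{22}c_{jk}=\Delta_{11}c_{jk}+\Delta_{11}c_{j+1,k}+\Delta_{11}c_{j,k+1}+\Delta_{11}c_{j+1,k+1}$ together with the monotonicity of $b$ (the paper absorbs the shifted blocks by summing dyadically to infinity, which is just a cleaner version of your boundary-splitting), and part (ii) via a separable sequence with a period-two oscillation that $\Delta_{2}$ cancels but $\Delta_{1}$ detects. The paper's concrete witness is $c_{jk}=\frac{(2+(-1)^{j})(2+(-1)^{k})}{j^{2}k^{2}}$, which realizes your plan with the decay built in (so (\ref{2.2}) holds without any logarithmic correction) and with the failure of the $r=1$ inequality for every admissible $\mathcal{C}$ and $b$ checked by direct computation of both sides, rather than by appeal to a one-dimensional separation result.
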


Analogously as in Theorem \ref{T.8}, we can show:

\begin{cor}
\qquad

\begin{description}
\item[(i)] $DGM(_2\alpha,_2\beta,_2\gamma,1)\subset
DGM(_2\alpha,_2\beta,_2\gamma,2)$.

\item[(ii)] There exists a double sequence $\{c_{jk}\}_{j,k=1}^{\infty}$,
with the property (\ref{2.2}), which belongs to the class $%
DGM(_2\alpha,_2\beta,_2\gamma,2)$ but it does not belong to the class $%
DGM(_2\alpha,_2\beta,_2\gamma,1)$.
\end{description}

\label{C.1}
\end{cor}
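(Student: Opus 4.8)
The plan is to follow the pattern of the proof of Theorem~\ref{T.8}. The algebraic heart of part (i) is that a difference of step two splits into two differences of step one: for every double sequence $\{c_{jk}\}$,
\[
\Delta_{20}c_{jk}=\Delta_{10}c_{jk}+\Delta_{10}c_{j+1,k},\qquad
\Delta_{02}c_{jk}=\Delta_{01}c_{jk}+\Delta_{01}c_{j,k+1},
\]
and, since $\Delta_{22}=\Delta_{20}\Delta_{02}$,
\[
\Delta_{22}c_{jk}=\sum_{a=0}^{1}\sum_{b=0}^{1}\Delta_{11}c_{j+a,k+b}.
\]
By the triangle inequality, each of the three sums occurring in the definition of $DGM(_2\alpha,_2\beta,_2\gamma,2)$ is bounded above by a sum of at most four sums of the same type, but formed with $\Delta_{10}$, $\Delta_{01}$ or $\Delta_{11}$ and extended over the shifted blocks $[m+a,2m-1+a]$ and $[n+b,2n-1+b]$ with $a,b\in\{0,1\}$. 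Since $[m+a,2m-1+a]\subset[m+a,2(m+a)-1]$ (and likewise in the second variable), each such shifted sum is dominated, via the hypothesis $\{c_{jk}\}\in DGM(_2\alpha,_2\beta,_2\gamma,1)$ applied at the level $m$ or $m+1$ (resp. $n$ or $n+1$, all of which exceed $\lambda$), by the corresponding right-hand side of (\ref{2.6}), (\ref{2.7}) or (\ref{2.8}) at that level.

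For part (i) it then remains to recombine these finitely many blockwise estimates into a single inequality of the required $SBVDS_{1}$-type, with a larger constant and modified auxiliary sequences. For the mixed ($\Delta_{22}$) condition this is painless: the four pieces carry the factors $\sup_{M+N\ge b_{3}(m+n+i)}\sum_{j=M}^{2M}\sum_{k=N}^{2N}|c_{jk}|$ with $i\in\{0,1,2\}$, and passing to $b_{3}^{\ast}(l):=\min\{b_{3}(l),b_{3}(l+1),b_{3}(l+2)\}$ — which still tends to infinity — bounds all of them by the single factor $\sup_{M+N\ge b_{3}^{\ast}(m+n)}\sum_{j=M}^{2M}\sum_{k=N}^{2N}|c_{jk}|$, so the $\Delta_{22}$-inequality holds with constant $4\mathcal{C}$ and $b_{3}'=b_{3}^{\ast}$. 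For the two single-variable conditions one has to merge a contribution at level $m$ with one at level $m+1$; using $1/(m+1)\le 1/m$ for the prefactor and setting $b_{1}'(m):=\min\{b_{1}(m),b_{1}(m+1)\}$ (similarly $b_{2}'$), the two maxima over $[b_{1}(m),\lambda b_{1}(m)]$ and $[b_{1}(m+1),\lambda b_{1}(m+1)]$ are to be absorbed into a single maximum over $[b_{1}'(m),\lambda' b_{1}'(m)]$ after enlarging $\lambda$. I expect the main obstacle of (i) to be precisely this last step — keeping the ``maximum over a \emph{bounded} range'' structure of (\ref{2.6})--(\ref{2.7}) intact under a unit shift of the index — since, in contrast to the situation in Theorem~\ref{T.8} (where the $_3$-classes use suprema over unbounded ranges together with monotone $b$), the auxiliary sequences of the $SBVDS_{1}$-type class are not assumed monotone, so controlling the enlargement of $\lambda$ needs care.

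For part (ii) I would exhibit an explicit sequence, for instance the alternating, rapidly decaying product
\[
c_{jk}=\frac{(-1)^{j}(-1)^{k}}{j^{2}k^{2}}\qquad(j,k\ge1),
\]
for which $jk|c_{jk}|=1/(jk)\to0$, so (\ref{2.2}) holds. The two-step differences enjoy cancellation,
\[
|\Delta_{20}c_{jk}|\asymp\frac{1}{j^{3}k^{2}},\qquad
|\Delta_{02}c_{jk}|\asymp\frac{1}{j^{2}k^{3}},\qquad
|\Delta_{22}c_{jk}|\asymp\frac{1}{j^{3}k^{3}},
\]
whence $\sum_{j=m}^{2m-1}|\Delta_{20}c_{jn}|\asymp 1/(m^{2}n^{2})$, and similarly for the other two left-hand sides; comparing with ${}_{2}\alpha_{mn}$, ${}_{2}\beta_{mn}$, ${}_{2}\gamma_{mn}$ evaluated with $b_{1}(m)=b_{2}(m)=m$, $b_{3}(l)=l$, $\lambda=2$ and a sufficiently large $\mathcal{C}$ gives $\{c_{jk}\}\in DGM(_2\alpha,_2\beta,_2\gamma,2)$. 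On the other hand the one-step difference has no cancellation, $|\Delta_{10}c_{jk}|\asymp 1/(j^{2}k^{2})$, so $\sum_{j=m}^{2m-1}|\Delta_{10}c_{jn}|\asymp 1/(mn^{2})$, while for any admissible $b_{1}$ one has ${}_{2}\alpha_{mn}\asymp 1/(mn^{2}b_{1}(m))$; since $b_{1}(m)\to\infty$, the inequality $\sum_{j=m}^{2m-1}|\Delta_{10}c_{jn}|\le\mathcal{C}\,{}_{2}\alpha_{mn}$ must fail for all large $m$, no matter how $\mathcal{C},\lambda$ and $b_{1}$ are chosen. Hence (\ref{2.6}) (with $\Delta_{10}$) cannot hold and $\{c_{jk}\}\notin DGM(_2\alpha,_2\beta,_2\gamma,1)$. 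The only real work here is the routine blockwise estimation of the numerators (of the form $\asymp\sum 1/j^{p}$) and of the maxima and suprema appearing in the denominators.
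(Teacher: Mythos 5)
Your part (ii) is correct and runs parallel to the paper's: the paper uses the non-negative sequence (\ref{5.7}), $c_{jk}=\frac{2+(-1)^{j}}{j^{2}}\cdot\frac{2+(-1)^{k}}{k^{2}}$, while you use a sign-alternating one, but the mechanism (two-step differences gain one order of decay, one-step differences do not, and $b_{1}(m)\to\infty$ kills the inequality with $\Delta_{10}$) is identical and your order-of-magnitude estimates are right.

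In part (i), however, the step you yourself flag as the ``main obstacle'' is a genuine gap, and your proposed fix does not work. After writing $\Delta_{20}c_{jn}=\Delta_{10}c_{jn}+\Delta_{10}c_{j+1,n}$ (which is also the paper's starting point) you must combine a bound of the form $\frac{\mathcal{C}}{m}\max_{b_{1}(m)\le M\le\lambda b_{1}(m)}\sum_{j=M}^{2M}|c_{jn}|$ with one at a shifted level, and you propose $b_{1}'(m)=\min\{b_{1}(m),b_{1}(m+1)\}$ together with an enlarged $\lambda'$. For that you would need
\begin{equation*}
[b_{1}(m),\lambda b_{1}(m)]\cup[b_{1}(m+1),\lambda b_{1}(m+1)]\subset[b_{1}'(m),\lambda' b_{1}'(m)],
\qquad\text{i.e.}\qquad
\lambda'\ge\lambda\,\frac{\max\{b_{1}(m),b_{1}(m+1)\}}{\min\{b_{1}(m),b_{1}(m+1)\}},
\end{equation*}
and since the sequences $b_{1},b_{2}$ in the $SBVDS_{1}$-type class are not assumed monotone, this ratio can be unbounded in $m$, so no fixed $\lambda'$ exists; the ``min'' trick is sound only for the $\Delta_{22}$ condition (\ref{2.8}), where the right-hand side is a supremum over the unbounded range $M+N\ge b_{3}(\cdot)$ and is therefore monotone in the threshold -- your treatment of that condition is fine, and arguably cleaner than the paper's appeal to (\ref{5.6}). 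The paper closes the gap for (\ref{2.6})--(\ref{2.7}) by a different device: after the triangle inequality it absorbs the single leftover term $|\Delta_{10}c_{2m,n}|$ into the whole block sum $\sum_{j=2m}^{4m-1}|\Delta_{10}c_{jn}|$ and applies (\ref{2.6}) at level $2m$, obtaining $2\frac{\mathcal{C}}{m}(S_{1}+S_{2})$ with $S_{1},S_{2}$ the block maxima attached to $b_{1}(m)$ and $b_{1}(2m)$; it then \emph{selects}, for each $m$, $b_{1}'(m)$ to be whichever of $b_{1}(m)$, $b_{1}(2m)$ realizes the larger maximum, so that $S_{1}+S_{2}\le2\max_{b_{1}'(m)\le M\le\lambda b_{1}'(m)}\sum_{j=M}^{2M}c_{jn}$ with the \emph{same} $\lambda$, yielding (\ref{5.8})--(\ref{5.9}) with constant $4\mathcal{C}$. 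So the missing idea is: do not merge the two ranges into one enlarged range; choose pointwise the range on which the maximum is bigger (this selection would work equally well with your shift $m\mapsto m+1$ in place of $2m$). Without it, your proof of part (i) is incomplete for the first two defining inequalities of the class.
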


Now, we formulate some remarks.

\begin{rem}
From Theorem \ref{T.7}, using Theorem \ref{T.8} and Corollary \ref{C.1}, we
obtain Theorem \ref{T.5} and Theorem \ref{T.4}.
\end{rem}

\begin{rem}
There exist $(x_0,y_0)\in\mathbb{R}^2$ and a sequence $\{c_{jk}\}_{j,k=1}^{%
\infty}$ belonging to the class $DGM(_3\alpha,_3\beta,_3\gamma,3)$, with the
property (\ref{2.2}), such that the series (\ref{2.1}) is divergent in $%
(x_0,y_0)$. \label{R.2}
\end{rem}

\begin{rem}
Remark \ref{R.2} shows that the results from Theorem \ref{T.7} are not true
with $r=3$ instead of $r=2$.
\end{rem}

\section{Proof of the main results}

In this section we shall prove our main results.

\subsection{Proof of Theorem \protect\ref{T.7}}

\begin{description}
\item[Part (i):] Analogously as in (\cite{2S}, Theorem 2.5) we can show that
single series: 
\begin{equation}
\sum_{j=1}^{\infty }c_{jn}\sin jx,\qquad n=1,2,\ldots ,\qquad
\sum_{k=1}^{\infty }c_{mk}\sin ky,\qquad m=1,2,\ldots  \label{5.1}
\end{equation}%
are uniformly convergent since $\{c_{jn}\}_{j=1}^{\infty }\in GM(_{3}\alpha
,2)$ for any $n\in \mathbb{N}$ and $\{c_{mk}\}_{k=1}^{\infty }\in
GM(_{3}\beta ,2)$ for any $m\in \mathbb{N}$. Let $\epsilon >0$ be
arbitrarily fixed. We shall prove that for any $M\geq m>\eta ,$ $N\geq
n>\eta $ and any $(x,y)\in \mathbb{R}^{2}$ we have 
\begin{equation}
\left\vert \sum_{j=m}^{M}\sum_{k=n}^{N}c_{jk}\sin jx\sin ky\right\vert
<(1+2\pi \mathcal{C}+2\pi +1,5\pi ^{2}\mathcal{C}+\pi ^{2})\epsilon \mathbf{,%
}  \label{5.2}
\end{equation}%
where $\eta =\eta (\epsilon )>\lambda $ is the natural number which
satisfies for any $m,n>\eta $ 
\begin{equation*}
mn|c_{mn}|<\epsilon ,\qquad mn\sum_{j=m}^{\infty }\sum_{k=n}^{\infty
}|\Delta _{22}c_{jk}|<16C\epsilon ,\qquad m\sum_{j=m}^{\infty }\sup_{k\geq
n}k|\Delta _{20}c_{jk}|<4C\epsilon ,\qquad n\sum_{k=n}^{\infty }\sup_{j\geq
m}j|\Delta _{02}c_{jk}|<4C\epsilon .
\end{equation*}%
The inequality (\ref{5.2}) is trivial, when $x=0$ and $y$ is arbitrary or $%
y=0$ and $x$ is arbitrary. We have the same situation, if $x=\pi $ and $y$
is arbitrary or $y=\pi $ and $x$ is arbitrary. Suppose $x,y\in \left( 0,%
\frac{\pi }{2}\right) $, set $\mu :=\left\lceil \frac{1}{x}\right\rceil $
and $v:=\left\lceil \frac{1}{y}\right\rceil $, where $\lceil \cdot \rceil $
means the integer part of a real number. We have four cases:

\begin{description}
\item {\textsc{Case} (a):} $\eta <m\leq M<\mu \;$ and $\;\eta <n\leq N<v$.
Using the inequality $\sin x\leq x$, we have 
\begin{equation*}
\left\vert \sum_{j=m}^{M}\sum_{k=n}^{N}c_{jk}\sin jx\sin ky\right\vert \leq
xy\sum_{j=m}^{M}\sum_{k=n}^{N}jk|c_{jk}|<\frac{1}{\mu v}\sum_{j=m}^{\mu
}\sum_{k=n}^{v}\epsilon \leq \epsilon \mathbf{.}
\end{equation*}

\item {\textsc{Case} (b):} $\;\max {\{\eta ,\mu \}}<m\leq M\;$ and $\;\eta
<n\leq N<v$. We obtain 
\begin{equation*}
\left\vert \sum_{j=m}^{M}\sum_{k=n}^{N}c_{jk}\sin jx\sin ky\right\vert \leq
\sum_{k=n}^{N}\left\vert \sin ky\right\vert \cdot \left\vert
\sum_{j=m}^{M}c_{jk}\sin jx\right\vert \leq y\sum_{k=n}^{N}k\left\vert
\sum_{j=m}^{M}c_{jk}\sin jx\right\vert \mathbf{.}
\end{equation*}%
By (\ref{3.10}) 
\begin{align*}
\left\vert \sum_{j=m}^{M}\sum_{k=n}^{N}c_{jk}\sin jx\sin ky\right\vert &
\leq y\sum_{k=n}^{N}k\frac{\pi }{4x}\left( \sum_{j=m}^{M}|\Delta
_{20}c_{jk}|+\sum_{j=M+1}^{M+2}|c_{jk}|+\sum_{j=m}^{m+1}|c_{jk}|\right) \\
& \leq y\sum_{k=n}^{N}k\frac{\pi \mu }{4}\left( \sum_{j=m}^{M}|\Delta
_{20}c_{jk}|+\sum_{j=M+1}^{M+2}|c_{jk}|+\sum_{j=m}^{m+1}|c_{jk}|\right) \\
& \leq \frac{\pi }{4v}\sum_{k=n}^{v}\left( m\sup_{k\geq
n}k\sum_{j=m}^{\infty }|\Delta _{20}c_{jk}|+4\sup_{j\geq m}\sup_{k\geq
n}jk|c_{jk}|\right)
\end{align*}%
and using Lemma \ref{L.2} and (\ref{2.2}) we get 
\begin{equation*}
\left\vert \sum_{j=m}^{M}\sum_{k=n}^{N}c_{jk}\sin jx\sin ky\right\vert <%
\frac{\pi }{4}(4\mathcal{C}\epsilon +4\epsilon )=(\pi \mathcal{C}+\pi
)\epsilon \mathbf{.}
\end{equation*}

\item {\textsc{Case} (c):} $\;\eta <m\leq M<\mu \;$ and $\;\max {\{\eta ,v\}}%
<n\leq N$. We have 
\begin{equation*}
\left\vert \sum_{j=m}^{M}\sum_{k=n}^{N}c_{jk}\sin jx\sin ky\right\vert \leq
\sum_{j=m}^{M}|\sin jx|\cdot \left\vert \sum_{k=n}^{N}c_{jk}\sin
ky\right\vert \leq x\sum_{j=m}^{M}j\left\vert \sum_{k=n}^{N}c_{jk}\sin
ky\right\vert \mathbf{.}
\end{equation*}%
Using (\ref{3.11}) 
\begin{align*}
\left\vert \sum_{j=m}^{M}\sum_{k=n}^{N}c_{jk}\sin jx\sin ky\right\vert &
\leq x\sum_{j=m}^{M}j\frac{\pi }{4y}\left( \sum_{k=n}^{N}|\Delta
_{02}c_{jk}|+\sum_{k=N+1}^{N+2}|c_{jk}|+\sum_{k=n}^{n+1}|c_{jk}|\right) \\
& \leq x\sum_{j=m}^{M}j\frac{\pi v}{4}\left( \sum_{k=n}^{N}|\Delta
_{02}c_{jk}|+\sum_{k=N+1}^{N+2}|c_{jk}|+\sum_{k=n}^{n+1}|c_{jk}|\right) \\
& \leq \frac{\pi }{4\mu }\sum_{j=m}^{\mu }\left( n\sup_{j\geq
m}j\sum_{k=n}^{\infty }|\Delta _{02}c_{jk}|+4\sup_{j\geq m}\sup_{k\geq
n}jk|c_{jk}|\right)
\end{align*}%
and by Lemma \ref{L.2} and (\ref{2.2}) 
\begin{equation*}
\left\vert \sum_{j=m}^{M}\sum_{k=n}^{N}c_{jk}\sin jx\sin ky\right\vert <%
\frac{\pi }{4}\left( 4\mathcal{C}\epsilon +4\epsilon \right) =(\pi \mathcal{C%
}+\pi )\epsilon \mathbf{.}
\end{equation*}

\item {\textsc{Case} (d):} $\;\max {\{\eta ,\mu \}}<m\leq M\;$ and $\;\max {%
\{\eta ,v\}}<n\leq N$. Using Lemma \ref{L.4}, we get

\begin{align*}
\left\vert \sum_{j=m}^{M}\sum_{k=n}^{N}\right. & \left. c_{jk}\sin jx\sin
ky\right\vert \leq \left\vert \sum_{j=m}^{M}\left( -\sum_{k=n}^{N}\Delta
_{02}c_{jk}\widetilde{D}_{k,2}(y)+\sum_{k=N+1}^{N+2}c_{jk}\widetilde{D}%
_{k,-2}(y)-\sum_{k=n}^{n+1}c_{jk}\widetilde{D}_{k,-2}(y)\right) \sin
jx\right\vert \\
& =\left\vert -\sum_{k=n}^{N}\left( \sum_{j=m}^{M}\Delta _{02}c_{jk}\sin
jx\right) \widetilde{D}_{k,2}(y)+\sum_{k=N+1}^{N+2}\left(
\sum_{j=m}^{M}c_{jk}\sin jx\right) \widetilde{D}_{k,-2}(y)\right. \\
& \left. -\sum_{k=n}^{n+1}\left( \sum_{j=m}^{M}c_{jk}\sin jx\right) 
\widetilde{D}_{k,-2}(y)\right\vert \\
& =\left\vert -\sum_{k=n}^{N}\left( -\sum_{j=m}^{M}\Delta _{02}(\Delta
_{20}c_{jk})\widetilde{D}_{j,2}(x)+\sum_{j=M+1}^{M+2}\Delta _{02}c_{jk}%
\widetilde{D}_{j,-2}-\sum_{j=m}^{m+1}\Delta _{02}c_{jk}\widetilde{D}%
_{j,-2}(x)\right) \widetilde{D}_{k,2}(y)\right. \\
& +\sum_{k=N+1}^{N+2}\left( -\sum_{j=m}^{M}\Delta _{20}c_{jk}\widetilde{D}%
_{j,2}(x)+\sum_{j=M+1}^{M+2}c_{jk}\widetilde{D}_{j,-2}-\sum_{j=m}^{m+1}c_{jk}%
\widetilde{D}_{j,-2}(x)\right) \widetilde{D}_{k,-2}(y) \\
& \left. -\sum_{k=n+1}^{n+1}\left( -\sum_{j=m}^{M}\Delta _{20}c_{jk}%
\widetilde{D}_{j,2}(x)+\sum_{j=M+1}^{M+2}c_{jk}\widetilde{D}%
_{j,-2}-\sum_{j=m}^{m+1}c_{jk}\widetilde{D}_{j,-2}(x)\right) \widetilde{D}%
_{k,-2}(y)\right\vert \\
& \leq \sum_{j=m}^{M}\sum_{k=n}^{N}|\Delta _{22}c_{jk}|\cdot |\widetilde{D}%
_{j,2}(x)|\cdot |\widetilde{D}_{k,2}(y)|+\sum_{j=M+1}^{M+2}\sum_{k=n}^{N}|%
\Delta _{02}c_{jk}|\cdot |\widetilde{D}_{j,-2}(x)|\cdot |\widetilde{D}%
_{k,2}(y)| \\
& +\sum_{j=m}^{m+1}\sum_{k=n}^{N}|\Delta _{02}c_{jk}|\cdot |\widetilde{D}%
_{j,-2}(x)|\cdot |\widetilde{D}_{k,2}(y)|+\sum_{j=m}^{M}\sum_{k=N+1}^{N+2}|%
\Delta _{20}c_{jk}|\cdot |\widetilde{D}_{j,2}(x)|\cdot |\widetilde{D}%
_{k,-2}(y)| \\
& +\sum_{j=M+1}^{M+2}\sum_{k=N+1}^{N+2}|c_{jk}|\cdot |\widetilde{D}%
_{j,-2}(x)|\cdot |\widetilde{D}_{k,-2}(y)|+\sum_{j=m}^{m+1}%
\sum_{k=N+1}^{N+2}|c_{jk}|\cdot |\widetilde{D}_{j,-2}(x)|\cdot |\widetilde{D}%
_{k,-2}(y)| 
\end{align*}
\begin{align*}
& +\sum_{j=m}^{M}\sum_{k=n}^{n+1}|\Delta _{20}c_{jk}|\cdot |\widetilde{D}%
_{j,2}(x)|\cdot |\widetilde{D}_{k,-2}(y)|+\sum_{j=M+1}^{M+2}%
\sum_{k=n}^{n+1}|c_{jk}|\cdot |\widetilde{D}_{j,-2}(x)|\cdot |\widetilde{D}%
_{k,-2}(y)| \\
& +\sum_{j=m}^{m+1}\sum_{k=n}^{n+1}|c_{jk}|\cdot |\widetilde{D}%
_{j,-2}(x)|\cdot |\widetilde{D}_{k,-2}(y)|.
\end{align*}%
Using (\ref{3.12}), (\ref{2.2}) Lemma \ref{L.1} and Lemma \ref{L.2}, we
obtain 
\begin{align*}
\left\vert \sum_{j=m}^{M}\right. & \left. \sum_{k=n}^{N}c_{jk}\sin jx\sin
ky\right\vert \leq \\
& \leq \frac{\pi ^{2}}{16xy}\left( \sum_{j=m}^{M}\sum_{k=n}^{N}|\Delta
_{22}c_{jk}|+\sum_{j=M+1}^{M+2}\sum_{k=n}^{N}|\Delta
_{02}c_{jk}|+\sum_{j=m}^{m+1}\sum_{k=n}^{N}|\Delta
_{02}c_{jk}|+\sum_{j=m}^{M}\sum_{k=N+1}^{N+2}|\Delta _{20}c_{jk}|\right. \\
& \left.
+\sum_{j=M+1}^{M+2}\sum_{k=N+1}^{N+2}|c_{jk}|+\sum_{j=m}^{m+1}%
\sum_{k=N+1}^{N+2}|c_{jk}|+\sum_{j=m}^{M}\sum_{k=n}^{n+1}|\Delta
_{20}c_{jk}|+\sum_{j=M+1}^{M+2}\sum_{k=n}^{n+1}|c_{jk}|+\sum_{j=m}^{m+1}%
\sum_{k=n}^{n+1}|c_{jk}|\right) \\
& \leq \frac{\pi ^{2}}{16}\left( mn\sum_{j=m}^{\infty }\sum_{k=n}^{\infty
}|\Delta _{22}c_{jk}|+4m\sup_{k\geq n}k\sum_{j=m}^{\infty }|\Delta
_{20}c_{jk}|+4n\sup_{j\geq m}j\sum_{k=n}^{\infty }|\Delta
_{02}c_{jk}|+16\sup_{j\geq m}\sup_{k\geq n}jk|c_{jk}|\right) \\
& <\frac{\pi ^{2}}{16}(16\mathcal{C}\epsilon +4\mathcal{C}\epsilon +4%
\mathcal{C}\epsilon +16\epsilon )=\left( \frac{3}{2}\mathcal{C}+1\right) \pi
^{2}\epsilon .
\end{align*}
\end{description}

\qquad

Let $x\in \left( \frac{\pi }{2},\pi \right) $ and $y\in \left( 0,\frac{\pi }{%
2}\right) $, set $\mu :=\left\lceil \frac{1}{\pi -x}\right\rceil $ and $%
v:=\left\lceil \frac{1}{y}\right\rceil $. We have four cases:

\begin{description}
\item {\textsc{Case} ($a^{\ast }$):} $\;\eta <m\leq M<\mu \;$ and $\;\eta
<n\leq N<v$. Using the inequality $\sin x\leq \pi -x$ and $\sin y\leq y$, we
get 
\begin{equation*}
\left\vert \sum_{j=m}^{M}\sum_{k=n}^{N}c_{jk}\sin jx\sin ky\right\vert \leq
(\pi -x)y\sum_{j=m}^{M}\sum_{k=n}^{N}jk|c_{jk}|<\frac{1}{\mu v}%
\sum_{j=m}^{\mu }\sum_{k=n}^{v}\epsilon \leq \epsilon \mathbf{.}
\end{equation*}

\item {\textsc{Case} ($b^{\ast }$):} $\;\max {\{\eta ,\mu \}}<m\leq M\;$ and 
$\;\eta <n\leq N<v$. Applying (\ref{3.13}) we get 
\begin{align*}
\left\vert \sum_{j=m}^{M}\sum_{k=n}^{N}c_{jk}\sin jx\sin ky\right\vert &
\leq \sum_{k=n}^{N}\left\vert \sin ky\right\vert \cdot \left\vert
\sum_{j=m}^{M}c_{jk}\sin jx\right\vert \leq y\sum_{k=n}^{N}k\left\vert
\sum_{j=m}^{M}c_{jk}\sin jx\right\vert \\
& \leq y\sum_{k=n}^{N}k\frac{\pi }{4(\pi -x)}\left( \sum_{j=m}^{M}|\Delta
_{20}c_{jk}|+\sum_{j=M+1}^{M+2}|c_{jk}|+\sum_{j=m}^{m+1}|c_{jk}|\right)
<(\pi \mathcal{C}+\pi )\epsilon \mathbf{.}
\end{align*}

\item {\textsc{Case} ($c^{\ast }$):} $\;\eta <m\leq M<\mu \;$ and $\;\max {%
\{\eta ,v\}}<n\leq N$. Analogously as in case (c), we have the inequality 
\begin{equation*}
\left\vert \sum_{j=m}^{M}\sum_{k=n}^{N}c_{jk}\sin jx\sin ky\right\vert \leq
\sum_{j=m}^{M}|\sin jx|\cdot \left\vert \sum_{k=n}^{N}c_{jk}\sin
ky\right\vert \leq (\pi -x)\sum_{j=m}^{M}j\left\vert
\sum_{k=n}^{N}c_{jk}\sin ky\right\vert <(\pi \mathcal{C}+\pi )\epsilon 
\mathbf{.}
\end{equation*}

\item {\textsc{Case} ($d^{\ast }$):} $\;\max {\{\eta ,\mu \}}<m\leq M\;$ and 
$\;\max {\{\eta ,v\}}<n\leq N$. Using (\ref{3.12}) and (\ref{3.13})
analogously as in case~(d), we obtain 
\begin{align*}
\left\vert \sum_{j=m}^{M}\right. & \left. \sum_{k=n}^{N}c_{jk}\sin jx\sin
ky\right\vert \leq \\
& \leq \frac{\pi ^{2}}{16(\pi -x)y}\left(
\sum_{j=m}^{M}\sum_{k=n}^{N}|\Delta
_{22}c_{jk}|+\sum_{j=M+1}^{M+2}\sum_{k=n}^{N}|\Delta
_{02}c_{jk}|+\sum_{j=m}^{m+1}\sum_{k=n}^{N}|\Delta
_{02}c_{jk}|+\sum_{j=m}^{M}\sum_{k=N+1}^{N+2}|\Delta _{20}c_{jk}|\right. \\
& \left.
+\sum_{j=M+1}^{M+2}\sum_{k=N+1}^{N+2}|c_{jk}|+\sum_{j=m}^{m+1}%
\sum_{k=N+1}^{N+2}|c_{jk}|+\sum_{j=m}^{M}\sum_{k=n}^{n+1}|\Delta
_{20}c_{jk}|+\sum_{j=M+1}^{M+2}\sum_{k=n}^{n+1}|c_{jk}|+\sum_{j=m}^{m+1}%
\sum_{k=n}^{n+1}|c_{jk}|\right) \\
& <\left( \frac{3}{2}\mathcal{C}+1\right) \pi ^{2}\epsilon .
\end{align*}
\end{description}

Let $x\in \left( 0,\frac{\pi }{2}\right) $ and $y\in \left( \frac{\pi }{2}%
,\pi \right) $, set $\mu :=\left\lceil \frac{1}{x}\right\rceil $ and $%
v:=\left\lceil \frac{1}{\pi -y}\right\rceil $. Now, we have also four cases:

\begin{description}
\item {\textsc{Case} ($a^{\ast \ast }$):} $\eta <m\leq M<\mu \;$ and $\;\eta
<n\leq N<v$. Using the inequality $\sin x\leq x$ and $\sin y\leq \pi -y$, we
get 
\begin{equation*}
\left\vert \sum_{j=m}^{M}\sum_{k=n}^{N}c_{jk}\sin jx\sin ky\right\vert \leq
x(\pi -y)\sum_{j=m}^{M}\sum_{k=n}^{N}jk|c_{jk}|<\frac{1}{\mu v}%
\sum_{j=m}^{\mu }\sum_{k=n}^{v}\epsilon \leq \epsilon \mathbf{.}
\end{equation*}

\item {\textsc{Case} ($b^{\ast \ast }$):} $\;\max {\{\eta ,\mu \}}<m\leq M\;$
and $\;\eta <n\leq N<v$. We obtain similarly as in case (b) 
\begin{equation*}
\left\vert \sum_{j=m}^{M}\sum_{k=n}^{N}c_{jk}\sin jx\sin ky\right\vert \leq
\sum_{k=n}^{N}\left\vert \sin ky\right\vert \cdot \left\vert
\sum_{j=m}^{M}c_{jk}\sin jx\right\vert \leq (\pi
-y)\sum_{k=n}^{N}k\left\vert \sum_{j=m}^{M}c_{jk}\sin jx\right\vert <(\pi 
\mathcal{C}+\pi )\epsilon \mathbf{.}
\end{equation*}

\item {\textsc{Case} ($c^{\ast \ast }$):} $\;\eta <m\leq M<\mu \;$ and $%
\;\max {\{\eta ,v\}}<n\leq N$. Applying (\ref{3.13}) we get 
\begin{align*}
\left\vert \sum_{j=m}^{M}\sum_{k=n}^{N}c_{jk}\sin jx\sin ky\right\vert &
\leq \sum_{j=m}^{M}|\sin jx|\cdot \left\vert \sum_{k=n}^{N}c_{jk}\sin
ky\right\vert \leq x\sum_{j=m}^{M}j\left\vert \sum_{k=n}^{N}c_{jk}\sin
ky\right\vert \\
& \leq x\sum_{j=m}^{M}j\frac{\pi }{4(\pi -y)}\left( \sum_{k=n}^{N}|\Delta
_{02}c_{jk}|+\sum_{k=N+1}^{N+2}|c_{jk}|+\sum_{k=n}^{n+1}|c_{jk}|\right)
<(\pi \mathcal{C}+\pi )\epsilon \mathbf{.}
\end{align*}

\item {\textsc{Case} ($d^{\ast \ast }$):} $\;\max {\{\eta ,\mu \}}<m\leq M\;$
and $\;\max {\{\eta ,v\}}<n\leq N$. Using (\ref{3.12}) and (\ref{3.13}) and
analogously as in case (d), we get 
\begin{align*}
\left\vert \sum_{j=m}^{M}\right. & \left. \sum_{k=n}^{N}c_{jk}\sin jx\sin
ky\right\vert \leq \\
& \leq \frac{\pi ^{2}}{16x(\pi -y)}\left(
\sum_{j=m}^{M}\sum_{k=n}^{N}|\Delta
_{22}c_{jk}|+\sum_{j=M+1}^{M+2}\sum_{k=n}^{N}|\Delta
_{02}c_{jk}|+\sum_{j=m}^{m+1}\sum_{k=n}^{N}|\Delta
_{02}c_{jk}|+\sum_{j=m}^{M}\sum_{k=N+1}^{N+2}|\Delta _{20}c_{jk}|\right. \\
& \left.
+\sum_{j=M+1}^{M+2}\sum_{k=N+1}^{N+2}|c_{jk}|+\sum_{j=m}^{m+1}%
\sum_{k=N+1}^{N+2}|c_{jk}|+\sum_{j=m}^{M}\sum_{k=n}^{n+1}|\Delta
_{20}c_{jk}|+\sum_{j=M+1}^{M+2}\sum_{k=n}^{n+1}|c_{jk}|+\sum_{j=m}^{m+1}%
\sum_{k=n}^{n+1}|c_{jk}|\right) \\
& <\left( \frac{3}{2}\mathcal{C}+1\right) \pi ^{2}\epsilon .
\end{align*}
\end{description}

Finally, let $x\in \left( \frac{\pi }{2},\pi \right) $ and $y\in \left( 
\frac{\pi }{2},\pi \right) $, set $\mu :=\left\lceil \frac{1}{\pi -x}%
\right\rceil $ and $v:=\left\lceil \frac{1}{\pi -y}\right\rceil $.
Analogously as before we have four cases:

\begin{description}
\item {\textsc{Case} ($a^{\ast \ast \ast }$):} $\;\eta <m\leq M<\mu \;$ and $%
\;\eta <n\leq N<v$. Using the inequality $\sin x\leq \pi -x$ and $\sin y\leq
\pi -y$, we get 
\begin{equation*}
\left\vert \sum_{j=m}^{M}\sum_{k=n}^{N}c_{jk}\sin jx\sin ky\right\vert \leq
(\pi -x)(\pi -y)\sum_{j=m}^{M}\sum_{k=n}^{N}jk|c_{jk}|<\frac{1}{\mu v}%
\sum_{j=m}^{\mu }\sum_{k=n}^{v}\epsilon \leq \epsilon \mathbf{.}
\end{equation*}

\item {\textsc{Case} ($b^{\ast \ast \ast }$):} $\;\max {\{\eta ,\mu \}}%
<m\leq M\;$ and $\;\eta <n\leq N<v$. We have similarly as in case ($b^{\ast
} $) 
\begin{equation*}
\left\vert \sum_{j=m}^{M}\sum_{k=n}^{N}c_{jk}\sin jx\sin ky\right\vert \leq
(\pi -y)\sum_{k=n}^{N}k\left\vert \sum_{j=m}^{M}c_{jk}\sin jx\right\vert
<(\pi \mathcal{C}+\pi )\epsilon \mathbf{.}
\end{equation*}

\item {\textsc{Case} ($c^{\ast \ast \ast }$):} $\;\eta <m\leq M<\mu \;$ and $%
\;\max {\{\eta ,v\}}<n\leq N$. We obtain similarly as in case ($c^{\ast \ast
}$) 
\begin{equation*}
\left\vert \sum_{j=m}^{M}\sum_{k=n}^{N}c_{jk}\sin jx\sin ky\right\vert \leq
\leq (\pi -x)\sum_{j=m}^{M}j\left\vert \sum_{k=n}^{N}c_{jk}\sin
ky\right\vert <(\pi \mathcal{C}+\pi )\epsilon \mathbf{.}
\end{equation*}

\item {\textsc{Case} ($d^{\ast \ast \ast }$):} $\;\max {\{\eta ,\mu \}}%
<m\leq M\;$ and $\;\max {\{\eta ,v\}}<n\leq N$. Using (\ref{3.12}) and (\ref%
{3.13}), we have 
\begin{align*}
& \left\vert \sum_{j=m}^{M}\sum_{k=n}^{N}c_{jk}\sin jx\sin ky\right\vert \leq
\\
& \leq \frac{\pi ^{2}}{16(\pi -x)(\pi -y)}\left(
\sum_{j=m}^{M}\sum_{k=n}^{N}|\Delta
_{22}c_{jk}|+\sum_{j=M+1}^{M+2}\sum_{k=n}^{N}|\Delta
_{02}c_{jk}|+\sum_{j=m}^{m+1}\sum_{k=n}^{N}|\Delta
_{02}c_{jk}|+\sum_{j=m}^{M}\sum_{k=N+1}^{N+2}|\Delta _{20}c_{jk}|\right. \\
& \left.
+\sum_{j=M+1}^{M+2}\sum_{k=N+1}^{N+2}|c_{jk}|+\sum_{j=m}^{m+1}%
\sum_{k=N+1}^{N+2}|c_{jk}|+\sum_{j=m}^{M}\sum_{k=n}^{n+1}|\Delta
_{20}c_{jk}|+\sum_{j=M+1}^{M+2}\sum_{k=n}^{n+1}|c_{jk}|+\sum_{j=m}^{m+1}%
\sum_{k=n}^{n+1}|c_{jk}|\right) \\
& <\left( \frac{3}{2}\mathcal{C}+1\right) \pi ^{2}\epsilon
\end{align*}
\end{description}

If we summarize all partial estimations we get (\ref{5.2}), this ends the
proof of part (i).\hspace{\fill}
\end{description}

\begin{description}
\item[Part (ii):] Suppose that $\{c_{jk}\}_{j,k=1}^{\infty }$ is
non-negative and let $\epsilon >0$ be arbitrarily fixed. Using the form (\ref%
{2.2}) for the uniform regular convergence of (\ref{2.1}), we find that
there exists an integer $m_{0}=m_{0}(\epsilon )$ for which 
\begin{equation}
\left\vert \sum_{j=m}^{M}\sum_{k=n}^{N}c_{jk}\sin jx\sin ky\right\vert
<\epsilon  \label{5.3}
\end{equation}%
holds for any $m+n>m_{0}$ and any ($x,y$). Set $x_{1}(m)=\frac{\pi }{4m}$, $%
x_{2}(m)=\frac{\pi }{4\lambda b_1(m)}$, $y_{1}(n)=\frac{\pi }{4n}$, $y_{2}(n)=%
\frac{\pi }{4\lambda b_2(n)}$\qquad we have 
\begin{align}
& \sin (jx_{1}(m))\geq \sin \frac{\pi }{4}\qquad \text{if}\qquad m\leq j\leq
2m+1;\qquad \sin (jx_{2}(m))\geq \sin \frac{\pi }{4\lambda }\qquad \text{if}%
\qquad b_1(m)\leq j\leq 2\lambda b_1(m);  \notag \\
& \sin (ky_{1}(n))\geq \sin \frac{\pi }{4}\qquad \text{if}\qquad n\leq k\leq
2n+1;\qquad \sin (ky_{2}(n))\geq \sin \frac{\pi }{4\lambda }\qquad \text{if}%
\qquad b_2(n)\leq k\leq 2\lambda b_2(n).  \notag
\end{align}%
Since $\{b_{1}(l)\}_{l=1}^{\infty }$, $\{b_{2}(l)\}_{l=1}^{\infty }$, $%
\{b_{3}(l)\}_{l=1}^{\infty }$ tends to infinity, there exists an $m_{1}$
such that for any $m$, $n$: $m+n>m_{1}$ implies $m+n>m_{0}$, $%
b_{1}(m)+n>m_{0}$, $m+b_{2}(n)>m_{0}$ and $b_{3}(m+n)>m_{0}$. Then by (\ref%
{5.3}) and Lemma \ref{L.3}, we have for $m+n>m_{1}$

\begin{align*}
(5\mathcal{C}+8)\epsilon & >\mathcal{C}\left( \sup_{M+N\geq
b_{3}(m+n)}\sum_{j=M}^{2M}\sum_{k=N}^{2N}c_{jk}\sin (jx_{1}(M))\sin
(ky_{1}(N))\right) \\
& +2\mathcal{C}\sum_{j=b_{1}(m)}^{2\lambda
b_{1}(m)}\sum_{k=n}^{2n+1}c_{jk}\sin (jx_{2}(m))\sin (ky_{1}(n))+2\mathcal{C}%
\sum_{j=m}^{2m+1}\sum_{k=b_{2}(n)}^{2\lambda b_{2}(n)}c_{jk}\sin
(jx_{1}(m))\sin (ky_{2}(n)) \\
& +8\sum_{j=m}^{2m+1}\sum_{k=n}^{2n+1}c_{jk}\sin (jx_{1}(m))\sin (ky_{1}(n))%
\mathbf{.}
\end{align*}%
Next 
\begin{align*}
(5\mathcal{C}+8)\epsilon & >\mathcal{C}\left( \sin \frac{\pi }{4}\right)
^{2}\left( \sup_{{M+N}\geq b_{3}(m+n)}\sum_{j=M}^{2M}\sum_{k=N}^{2N}{|c_{jk}|%
}\right) +2\mathcal{C}\sin \frac{\pi }{4\lambda }\sin \frac{\pi }{4}%
\sum_{j=b_{1}(m)}^{2\lambda b_{1}(m)}\sum_{k=n}^{2n+1}c_{jk} \\
& +2\mathcal{C}\sin \frac{\pi }{4}\sin \frac{\pi }{4\lambda }%
\sum_{j=m}^{2m+1}\sum_{k=b_{2}(n)}^{2\lambda b_{2}(n)}c_{jk}+8\left( \sin 
\frac{\pi }{4}\right) ^{2}\sum_{j=m}^{2m+1}\sum_{k=n}^{2n+1}c_{jk}
\end{align*}%
and finally, we have 
\begin{equation*}
(5\mathcal{C}+8)\epsilon >\left( \sin \frac{\pi }{4}\sin \frac{\pi }{%
4\lambda }\right) mnc_{mn}\qquad \text{whenever}\qquad m+n>m_{1}:\text{and}%
:m,n>\lambda .
\end{equation*}%
Hence (\ref{2.2}) is satisfied when $j+k\rightarrow \infty $ and $j,k\geq
\lambda $. If $j\rightarrow \infty $ and $k<\lambda $ or $j<\lambda $ and $%
k\rightarrow \infty $, (\ref{2.2}) follows from the uniform convergence of
the series in (\ref{5.1}). It completes the proof of part (ii).\hspace{\fill}
$\square $
\end{description}

\subsection{Proof of Theorem \protect\ref{T.8}}

\begin{description}
\item[Part (i):] We prove that $DGM(_{3}\alpha ,_{3}\beta ,_{3}\gamma
,1)\subset DGM(_{3}\alpha ,_{3}\beta ,_{3}\gamma ,2)$. Let $%
\{c_{jk}\}_{j,k=1}^{\infty }\in DGM(_{3}\alpha ,_{3}\beta ,_{3}\gamma ,1)$.
It easy to see that 
\begin{align*}
\sum_{j=m}^{2m-1}|\Delta _{20}c_{jn}|& \leq \sum_{j=m}^{\infty }|\Delta
_{20}c_{jn}|=\sum_{j=m}^{\infty }|c_{jn}-c_{j+1,n}+c_{j+1,n}-c_{j+2,n}| \\
& \leq \sum_{j=m}^{\infty }|c_{jn}-c_{j+1,n}|+\sum_{j=m}^{\infty
}|c_{j+1,n}-c_{j+2,n}|\leq 2\sum_{j=m}^{\infty
}|c_{jn}-c_{j+1,n}|=2\sum_{j=m}^{\infty }|\Delta _{10}c_{jn}|
\end{align*}%
and by (\ref{2.9}), for $m\geq \lambda $ and $n\geq 1$, 
\begin{align*}
\sum_{j=m}^{\infty }|\Delta _{10}c_{jn}|& =\sum_{r=0}^{\infty
}\sum_{j=2^{r}m}^{2^{r+1}m-1}|\Delta _{10}c_{jn}|\leq \sum_{r=0}^{\infty }%
\frac{\mathcal{C}}{2^{r}m}\left( \sup_{M\geq
b(2^{r}m)}\sum_{j=M}^{2M}|c_{jn}|\right) \\
& \leq \sum_{r=0}^{\infty }\frac{1}{2^{r}}\left( \frac{\mathcal{C}}{m}%
\sup_{M\geq b(m)}\sum_{j=M}^{2M}|c_{jn}|\right) =2\cdot \frac{\mathcal{C}}{m}%
\sup_{M\geq b(m)}\sum_{j=M}^{2M}|c_{jn}|\mathbf{.}
\end{align*}%
Next, we get 
\begin{equation}
\sum_{j=m}^{2m-1}|\Delta _{20}c_{jn}|\leq 4\cdot \frac{\mathcal{C}}{m}%
\sup_{M\geq b(m)}\sum_{j=M}^{2M}|c_{jn}|\mathbf{.}  \label{5.4}
\end{equation}%
Similarly as above, for $n\geq \lambda $ and $m\geq 1$, 
\begin{equation}
\sum_{k=n}^{2n-1}|\Delta _{02}c_{mk}|\leq 4\cdot \frac{\mathcal{C}}{n}%
\sup_{N\geq b(n)}\sum_{k=N}^{2N}|c_{mk}|\mathbf{.}  \label{5.5}
\end{equation}%
Now, we have 
\begin{align*}
\sum_{j=m}^{2m-1}\sum_{k=n}^{2n-1}|\Delta _{22}c_{jk}|&
=\sum_{j=m}^{2m-1}\sum_{k=n}^{2n-1}|\Delta _{11}c_{jk}+\Delta
_{11}c_{j+1,k}+\Delta _{11}c_{j,k+1}+\Delta _{11}c_{j+1,k+1}| \\
& \leq \sum_{j=m}^{\infty }\sum_{k=n}^{\infty }|\Delta
_{11}c_{jk}|+\sum_{j=m+1}^{\infty }\sum_{k=n}^{\infty }|\Delta
_{11}c_{jk}|+\sum_{j=m}^{\infty }\sum_{k=n+1}^{\infty }|\Delta
_{11}c_{jk}|+\sum_{j=m+1}^{\infty }\sum_{k=n+1}^{\infty }|\Delta _{11}c_{jk}|
\\
& \leq 4\sum_{j=m}^{\infty }\sum_{k=n}^{\infty }|\Delta _{11}c_{jk}|
\end{align*}%
and by (\ref{2.11}), we obtain for $m,n\geq \lambda $ 
\begin{align*}
\sum_{j=m}^{\infty }\sum_{k=n}^{\infty }|\Delta _{11}c_{jk}|&
=\sum_{r=0}^{\infty }\sum_{s=0}^{\infty
}\sum_{j=2^{r}m}^{2^{r+1}m-1}\sum_{j=2^{s}n}^{2^{s+1}n-1}|\Delta
_{11}c_{jk}|\leq \sum_{r=0}^{\infty }\sum_{s=0}^{\infty }\frac{\mathcal{C}}{%
2^{r}m\cdot 2^{s}n}\left( \sup_{M+N\geq
b(2^{r}m+2^{s}n)}\sum_{j=M}^{2M}\sum_{k=N}^{2N}|c_{jk}|\right) \\
& \leq 4\cdot \frac{\mathcal{C}}{mn}\left( \sup_{M+N\geq
b(m+n)}\sum_{j=M}^{2M}\sum_{k=N}^{2N}|c_{jk}|\right) \mathbf{.}
\end{align*}%
Finally, we get 
\begin{equation}
\sum_{j=m}^{2m-1}\sum_{k=n}^{2n-1}|\Delta _{22}c_{jk}|\leq 16\cdot \frac{%
\mathcal{C}}{mn}\left( \sup_{M+N\geq
b(m+n)}\sum_{j=M}^{2M}\sum_{k=N}^{2N}|c_{jk}|\right) \mathbf{.}  \label{5.6}
\end{equation}

From (\ref{5.4}), (\ref{5.5}) and (\ref{5.6}) we have that $%
\{c_{jk}\}_{j,k=1}^{\infty }\in DGM(_{3}\alpha ,_{3}\beta ,_{3}\gamma ,2)$.
\bigskip

\item[Part (ii):] Let 
\begin{equation}
c_{jk}=\frac{2+(-1)^{j}}{j^{2}}\cdot \frac{2+(-1)^{k}}{k^{2}}\qquad \text{for%
}\;j,k\in \mathbb{N}\mathbf{.}  \label{5.7}
\end{equation}%
We show that $\{c_{jk}\}_{j,k=1}^{\infty }\in DGM(_{3}\alpha ,_{3}\beta
,_{3}\gamma ,2)$. It easy to see that 
\begin{equation*}
\Delta _{20}c_{jk}=c_{jk}-c_{j+2,k}=c_{jk}\cdot \frac{4(j+1)}{(j+2)^{2}}
\end{equation*}%
and 
\begin{align*}
\sum_{j=m}^{2m-1}\left\vert \Delta _{20}c_{\text{jn}}\right\vert &
=\sum_{j=m}^{2m-1}\left\vert c_{jn}\cdot \frac{4(j+1)}{(j+2)^{2}}\right\vert
\leq \sum_{j=m}^{2m-1}|c_{jn}|\frac{4}{j+1}\leq 4\sum_{j=m}^{2m-1}|c_{jn}|%
\frac{1}{j}\leq \frac{4}{m}\sum_{j=m}^{2m-1}|c_{jn}|\leq \frac{4}{m}%
\sum_{j=m}^{2m}|c_{jn}| \\
& \leq \frac{4}{m}\sup_{M\geq b(m)}\sum_{j=M}^{2M}|c_{jn}|\mathbf{.}
\end{align*}%
Similarly as above 
\begin{equation*}
\Delta _{02}c_{jk}=c_{jk}-c_{j,k+2}=c_{jk}\cdot \frac{4(k+1)}{(k+2)^{2}}
\end{equation*}%
and 
\begin{equation*}
\sum_{k=n}^{2n-1}|\Delta _{02}c_{mk}|\leq \frac{4}{n}\sup_{N\geq
b(n)}\sum_{k=N}^{2N}|c_{mk}|\mathbf{.}
\end{equation*}%
By elementary calculations 
\begin{equation*}
\Delta _{22}c_{\text{jk}}=c_{\text{jk}%
}-c_{j+2,k-}c_{j,k+2}+c_{j+2,k+2}=c_{jk}\cdot \frac{4(j+1)}{(j+2)^{2}}\cdot 
\frac{4(k+1)}{(k+2)^{2}}
\end{equation*}%
and 
\begin{align*}
\sum_{j=m}^{2m-1}\sum_{k=n}^{2n-1}|\Delta _{22}c_{jk}|&
=\sum_{j=m}^{2m-1}\sum_{k=n}^{2n-1}\left\vert c_{jk}\cdot \frac{4(j+1)}{%
(j+2)^{2}}\cdot \frac{4(k+1)}{(k+2)^{2}}\right\vert \leq
16\sum_{j=m}^{2m-1}\sum_{k=n}^{2n-1}|c_{jk}|\cdot \frac{1}{j+1}\cdot \frac{1%
}{k+1} \\
& \leq 16\sum_{j=m}^{2m-1}\sum_{k=n}^{2n-1}|c_{jk}|\frac{1}{jk}\leq \frac{16%
}{mn}\sum_{j=m}^{2m-1}\sum_{k=n}^{2n-1}|c_{jk}|\leq \frac{16}{mn}%
\sum_{j=m}^{2m}\sum_{k=n}^{2n}|c_{jk}| \\
& \leq \frac{16}{mn}\sup_{M+N\geq
b(m+n)}\sum_{j=M}^{2M}\sum_{k=N}^{2N}|c_{jk}|\mathbf{.}
\end{align*}%
Therefore $\{c_{jk}\}_{j,k=1}^{\infty }\in DGM(_{3}\alpha ,_{3}\beta
,_{3}\gamma ,2)$. Now, we show that $\{c_{jk}\}_{j,k=1}^{\infty }\notin
DGM(_{3}\alpha ,_{3}\beta ,_{3}\gamma ,1)$. We have 
\begin{align*}
\sum_{j=m}^{2m-1}\left\vert \Delta _{10}c_{jn}\right\vert &
=\sum_{j=m}^{2m-1}\left\vert \frac{(-1)^{j}+2}{j^{2}}\cdot \frac{(-1)^{n}+2}{%
n^{2}}-\frac{(-1)^{j+1}+2}{(j+1)^{2}}\cdot \frac{(-1)^{n}+2}{n^{2}}%
\right\vert \\
& =\frac{(-1)^{n}+2}{n^{2}}\sum_{j=m}^{2m-1}\left\vert \frac{(-1)^{j}+2}{%
j^{2}}-\frac{2-(-1)^{j}}{(j+1)^{2}}\right\vert \mathbf{.}
\end{align*}%
Let $A_{m}=\{j:\;m\leq j\leq 2m-1\;\text{and}\;j\;\text{is even}\}$. Then 
\begin{equation*}
\sum_{j=m}^{2m-1}\left\vert \Delta _{10}c_{jn}\right\vert \geq \frac{1}{n^{2}%
}\sum_{j\in A_{m}}\left( \frac{3}{j^{2}}-\frac{1}{(j+1)^{2}}\right) \geq 
\frac{1}{n^{2}}\sum_{j\in A_{m}}\left( \frac{3}{j^{2}}-\frac{1}{j^{2}}%
\right) \geq \frac{2}{n^{2}}\sum_{j\in A_{m}}\frac{1}{j^{2}}\geq \frac{m-1}{%
2n^{2}m^{2}}
\end{equation*}%
and since 
\begin{align*}
\frac{\mathcal{C}}{m}\sup_{M\geq b(m)}\sum_{j=M}^{2M}|c_{jn}|& =\frac{%
\mathcal{C}}{m}\sup_{M\geq b(m)}\sum_{j=M}^{2M}\left\vert \frac{(-1)^{j}+2}{%
j^{2}}\cdot \frac{(-1)^{n}+2}{n^{2}}\right\vert \leq \frac{\mathcal{C}}{m}%
\frac{(-1)^{n}+2}{n^{2}}\sup_{M\geq b(m)}\sum_{j=M}^{2M}\left\vert \frac{%
(-1)^{j}+2}{j^{2}}\right\vert \\
& \leq \frac{3\mathcal{C}}{mn^{2}}\sup_{M\geq b(m)}\sum_{j=M}^{2M}\frac{3}{%
j^{2}}\leq \frac{9\mathcal{C}}{mn^{2}}\sup_{M\geq b(m)}\frac{2}{M}\leq \frac{%
18\mathcal{C}}{mn^{2}b(m)}
\end{align*}%
the inequality 
\begin{equation*}
\sum_{j=m}^{2m-1}\left\vert \Delta _{10}c_{jn}\right\vert \leq \frac{%
\mathcal{C}}{m}\sup_{M\geq b(m)}\sum_{j=M}^{2M}|c_{jn}|
\end{equation*}%
does not hold, because $\frac{1}{b(m)}\rightarrow 0$ as $m\rightarrow \infty 
$.
\end{description}

This ends the proof.\hspace{\stretch{1}} $\square$

\subsection{Proof of Corollary \protect\ref{C.1}}

\begin{description}
\item[Part (i):] We prove that $DGM(_{2}\alpha ,_{2}\beta ,_{2}\gamma
,1)\subset DGM(_{2}\alpha ,_{2}\beta ,_{2}\gamma ,2)$. Let $%
\{c_{jk}\}_{j,k=1}^{\infty }\in DGM(_{2}\alpha ,_{2}\beta ,_{2}\gamma ,1)$.
Then for $m\geq \lambda $ and $n\geq 1$

\begin{align*}
\sum_{j=m}^{2m-1}|\Delta _{20}c_{jn}|& =\sum_{j=m}^{2m-1}|\Delta
_{10}c_{jn}+\Delta _{10}c_{j+1,n}|\leq \sum_{j=m}^{2m-1}|\Delta
_{10}c_{jn}|+\sum_{j=m}^{2m-1}|\Delta _{10}c_{j+1,n}| \\
& \leq \sum_{j=m}^{2m-1}|\Delta _{10}c_{jn}|+\sum_{j=m+1}^{2m}|\Delta
_{10}c_{jn}|\leq 2\sum_{j=m}^{2m}|\Delta _{10}c_{jn}|+|\Delta _{10}c_{2m,n}|
\end{align*}
\begin{align*}
& \leq 2\frac{\mathcal{C}}{m}\max_{b_{1}(m)\leq M\leq \lambda
b_{1}(m)}\sum_{j=M}^{2M}c_{jn}+\sum_{k=2m}^{4m-1}|\Delta _{10}c_{jn}| \\
& \leq 2\frac{\mathcal{C}}{m}\left( \max_{b_{1}(m)\leq M\leq \lambda
b_{1}(m)}\sum_{j=M}^{2M}c_{jn}+\max_{b_{1}(2m)\leq M\leq \lambda
b_{1}(2m)}\sum_{j=M}^{2M}c_{jn}\right) =2\frac{\mathcal{C}}{m}\left\{
S_{1}+S_{2}\right\} .
\end{align*}%
Let, for $m\geq \lambda $, $b_{1}^{\prime }(m)=b_{1}\left( m\right) $ if $%
S_{2}\leq S_{1}$ or $b_{1}^{\prime }(m)=b_{1}\left( 2m\right) $ if $%
S_{1}\leq S_{2}$. Then 
\begin{equation}
\sum_{j=m}^{2m-1}|\Delta _{20}c_{jn}|\leq 4\frac{\mathcal{C}}{m}%
\max_{b_{1}^{\prime }(m)\leq M\leq \lambda b_{1}^{\prime
}(m)}\sum_{j=M}^{2m}c_{jn}  \label{5.8}
\end{equation}%
for $m\geq \lambda $ and $n\geq 1.$ Similarly as above 
\begin{equation}
\sum_{k=n}^{2n-1}|\Delta _{02}c_{mk}|\leq 4\frac{\mathcal{C}}{n}%
\max_{b_{2}^{\prime }(n)\leq N\leq \lambda b_{2}^{\prime
}(n)}\sum_{k=N}^{2n}c_{mk},  \label{5.9}
\end{equation}%
for $n\geq \lambda $ and $m\geq 1$. Finally, by (\ref{5.6}) we get 
\begin{equation}
\sum_{j=m}^{2m-1}\sum_{k=n}^{2n-1}|\Delta _{22}c_{jk}|\leq 16\cdot \frac{%
\mathcal{C}}{mn}\left( \sup_{M+N\geq
b_{3}(m+n)}\sum_{j=M}^{2M}\sum_{k=N}^{2N}|c_{jk}|\right)  \label{5.10}
\end{equation}%
for $m,n\geq \lambda $. From (\ref{5.8}), (\ref{5.9}) and (\ref{5.10}) we
get that $\{c_{jk}\}_{j,k=1}^{\infty }\in DGM(_{2}\alpha ,_{2}\beta
,_{2}\gamma ,2)$. \newline

\item[Part (ii):] Taking the sequence (\ref{5.7}), we can show, similarly as
in the proof of Theorem \ref{T.8} (part (ii)) that $\{c_{jk}\}_{j,k=1}^{%
\infty }\in DGM(_{2}\alpha ,_{2}\beta ,_{2}\gamma ,2)$ and $%
\{c_{jk}\}_{j,k=1}^{\infty }\notin DGM(_{2}\alpha ,_{2}\beta ,_{1}\gamma ,1)$%
.
\end{description}

It completes the proof.\hspace{\stretch{1}} $\square$

\subsection{Proof of Remark \protect\ref{R.2}}

Let $c_{jk}=a_{j}\cdot a_{k}$, where 
\begin{equation*}
a_{n}=\left\{ 
\begin{array}{ccc}
\frac{3}{n\ln (n+1)} & \text{if} & n=3l+1, \\ 
\frac{1}{n\ln (n+1)} & \text{if} & n\neq 3l+1,%
\end{array}%
\right.
\end{equation*}%
for $l\in 
\mathbb{N}
\cup \left\{ 0\right\} $ and $(x_{0},y_{0})=\left( \frac{2}{3}\pi ,\frac{2%
}{3}\pi \right) $.\newline

It is easy to see that (\ref{2.2}) holds. Now, we shall prove that $%
\{c_{jk}\}_{j,k=1}^{\infty }\in DGM(_{3}\alpha ,_{3}\beta ,_{3}\gamma ,3)$.
Let $\qquad $%
\begin{equation*}
A_{m}=\{j:\;j=3l+1,\;l\in 
\mathbb{N}
\cup \left\{ 0\right\},\;m\leq j\leq 2m-1\}%
\end{equation*}%
and $\qquad $%
\begin{equation*}
B_{m}=\{j:\;j\neq 3l+1,\;l\in 
\mathbb{N}
\cup \left\{ 0\right\},\;m\leq j\leq 2m-1\}.
\end{equation*}%
Then we have%
\begin{equation*}
\sum_{j=m}^{2m-1}|\Delta _{30}c_{jn}|=\sum_{j=m}^{2m-1}a_{n}|a_{j}-a_{j+3}|
\end{equation*}%
\begin{align*}
& =\sum_{j\in A_{m}}a_{n}\left( \frac{3}{j\ln (j+1)}-\frac{3}{(j+3)\ln (j+4)}%
\right) +\sum_{j\in B_{m}}a_{n}\left( \frac{1}{j\ln (j+1)}-\frac{1}{(j+3)\ln
(j+4)}\right)  \\
& =3\sum_{j\in A_{m}}a_{n}\left( \frac{j(\ln (j+4)-\ln (j+1))+3\ln (j+4)}{%
j(j+3)\ln (j+1)\ln (j+4)}\right)  \\
& +\sum_{j\in B_{m}}a_{n}\left( \frac{j(\ln (j+4)-\ln (j+1))+3\ln (j+4)}{%
j(j+3)\ln (j+1)\ln (j+4)}\right) \mathbf{.}
\end{align*}%
Applying the Lagrange theorem, for the function $y=\ln (x)$, there exists $%
c\in (j+4,j+1)$ such that 
\begin{equation*}
\ln (j+4)-\ln (j+1)=\frac{3}{c}\leq \frac{3}{j+1}
\end{equation*}%
and 
\begin{align}
\sum_{j=m}^{2m-1}|\Delta _{30}c_{jn}|& \leq 3\sum_{j\in A_{m}}a_{n}\left( 
\frac{\frac{3j}{j+1}+3\ln (j+4)}{j(j+3)\ln (j+1)\ln (j+4)}\right)
+\sum_{j\in B_{m}}a_{n}\left( \frac{\frac{3j}{j+1}+3\ln (j+4)}{j(j+3)\ln
(j+1)\ln (j+4)}\right)   \notag \\
& \leq 3\sum_{j\in A_{m}}a_{n}\left( \frac{3+3\ln (j+4)}{j^{2}\ln (j+1)\ln
(j+4)}\right) +\sum_{j\in B_{m}}a_{n}\left( \frac{3+3\ln (j+4)}{j^{2}\ln
(j+1)\ln (j+4)}\right)   \notag \\
& \leq 3\sum_{j\in A_{m}}a_{n}\left( \frac{6}{j^{2}\ln (j+1)}\right)
+\sum_{j\in B_{m}}a_{n}\left( \frac{6}{j^{2}\ln (j+1)}\right) =6\sum_{j\in
A_{m}}a_{n}a_{j}\frac{1}{j}+6\sum_{j\in B_{m}}a_{n}a_{j}\frac{1}{j}  \notag
\\
& \leq \frac{6}{m}\sum_{j=m}^{2m}|c_{jn}|\leq \frac{6}{m}\sup_{M\geq
b(m)}\sum_{j=M}^{2M}|c_{jn}|  \label{5.11}
\end{align}%
Similarly as above 
\begin{equation}
\sum_{k=n}^{2n-1}|\Delta _{03}c_{mk}|\leq \frac{6}{n}\sup_{N\geq
b(n)}\sum_{k=N}^{2n}|c_{mk}|\mathbf{.}  \label{5.12}
\end{equation}%
By elementary calculations we have 
\begin{align}
&\sum_{j=m}^{2m-1}\sum_{k=n}^{2n-1}|\Delta
_{33}c_{jk}|=\sum_{j=m}^{2m-1}%
\sum_{k=n}^{2n-1}|(a_{j}-a_{j+3})(a_{k}-a_{k+3})|=\sum_{j=m}^{2m-1}\left%
\vert a_{j}-a_{j+3}\right\vert \sum_{k=n}^{2n-1}\left\vert
a_{k}-a_{k+3}\right\vert \notag \\
& =\left( \sum_{j\in A_{m}}\left( \frac{3}{j\ln (j+1)}-\frac{3}{(j+3)\ln
(j+4)}\right) +\sum_{j\in B_{m}}\left( \frac{1}{j\ln (j+1)}-\frac{1}{%
(j+3)\ln (j+4)}\right) \right)  \notag \\
& \cdot \left( \sum_{k\in A_{n}}\left( \frac{3}{k\ln (k+1)}-\frac{3}{%
(k+3)\ln (k+4)}\right) +\sum_{k\in B_{n}}\left( \frac{1}{k\ln (k+1)}-\frac{1%
}{(k+3)\ln (k+4)}\right) \right)  \notag \\
& \leq \left( 6\sum_{j\in A_{m}}\frac{3}{j^{2}\ln (j+1)}+6\sum_{j\in B_{m}}%
\frac{1}{j^{2}\ln (j+1)}\right) \left( 6\sum_{k\in A_{n}}\frac{3}{k^{2}\ln
(k+1)}+6\sum_{k\in B_{n}}\frac{1}{k^{2}\ln (k+1)}\right)  \notag \\
& =\left( 6\sum_{j\in A_{m}}c_{j}\frac{1}{j}+6\sum_{j\in B_{m}}c_{j}\frac{1}{%
j}\right) \left( 6\sum_{k\in A_{n}}c_{k}\frac{1}{k}+6\sum_{k\in B_{n}}c_{k}%
\frac{1}{k}\right) \leq \frac{36}{mn}\sum_{j=m}^{2m-1}%
\sum_{k=n}^{2n-1}|c_{jk}|  \notag \\
& \leq \frac{36}{mn}\sup_{M+N\geq
b(m+n)}\sum_{j=M}^{2M}\sum_{k=N}^{2N}|c_{jk}|\mathbf{.}  \label{5.13}
\end{align}%
From (\ref{5.11}), (\ref{5.12}) and (\ref{5.13}) we obtain that $%
\{c_{jk}\}_{j,k=1}^{\infty }\in DGM(_{3}\alpha ,_{3}\beta ,_{3}\gamma ,3)$.%
\newline

Moreover, we have 
\begin{align*}
& \sum_{j=1}^{3M+2}\sum_{k=1}^{3N+2}c_{jk}\sin jx_{0}\sin ky_{0} \\
& =\left( a_{1}\sin \frac{2}{3}\pi +a_{2}\sin \frac{4}{3}\pi
+\sum_{j=3}^{3M+2}a_{j}\sin jx_{0}\right) \left( a_{1}\sin \frac{2}{3}\pi
+a_{2}\sin \frac{4}{3}\pi +\sum_{k=3}^{3N+2}a_{k}\sin ky_{0}\right) \\
& =\left[ \left( a_{1}\sin \frac{2}{3}\pi -a_{2}\sin \frac{2}{3}\pi \right)
+\sum_{j=1}^{M}\sum_{i=0}^{2}a_{3j+i}\sin \left( (3j+i)\frac{2}{3}\pi
\right) \right] \\
& \cdot \left[ \left( a_{1}\sin \frac{2}{3}\pi -a_{2}\sin \frac{2}{3}\pi
\right) +\sum_{k=1}^{N}\sum_{i=0}^{2}a_{3k+i}\sin \left( (3k+i)\frac{2}{3}%
\pi \right) \right] \\
& =\left[ \sin \frac{2}{3}\pi (a_{1}-a_{2})+\sum_{j=1}^{M}\left(
a_{3j+1}\sin \frac{2}{3}\pi +a_{3j+2}\sin \frac{4}{3}\pi \right) \right]
\cdot \left[ \sin \frac{2}{3}\pi (a_{1}-a_{2})+\sum_{k=1}^{N}\left(
a_{3k+1}\sin \frac{2}{3}\pi +a_{3k+2}\sin \frac{4}{3}\pi \right) \right] \\
& =\left( \sin \frac{2}{3}\pi \right) ^{2}\sum_{j=0}^{M}\sum_{k=0}^{N}\left( 
\frac{3}{(3j+1)\ln (3j+2)}-\frac{1}{(3j+2)\ln (3j+3)}\right) \cdot \left( 
\frac{3}{(3k+1)\ln (3k+2)}-\frac{1}{(3k+2)\ln (3k+3)}\right) \\
& \geq \left( \sin \frac{2}{3}\pi \right) ^{2}\sum_{j=0}^{M}\sum_{k=0}^{N}%
\frac{2}{(3j+1)\ln (3j+3)}\cdot \frac{2}{(3k+1)\ln (3k+3)}\rightarrow \infty
\qquad \text{as}\qquad M+N\rightarrow \infty \mathbf{.}
\end{align*}

This ends the proof.\hspace{\stretch{1}} $\square$
\newpage

\end{document}